\title{Convergence of Allen--Cahn equations to De Giorgi's multiphase mean curvature flow}
\author{Pascal Steinke\footnote{steinke@iam.uni-bonn.de, Institute for Applied Mathematics, University of Bonn, Endenicher Allee 60, D-53115 Bonn / Germany}}
\begin{document}

%\frontmatter	
	
	\maketitle
	%\pagenumbering{roman}	
	\begin{abstract}
	
	This paper presents a conditional convergence result of solutions to the 
	Allen--Cahn equation with arbitrary potentials to a De Giorgi type $ \bv 
	$-solution to multiphase mean 
	curvature flow. 
	Moreover we show that De Giorgi type $ \bv 
	$-solutions are De Giorgi type varifold solutions, 
	and thus our solution is unique in a weak-strong sense.
	\\
	\textbf{Keywords:} Gradient flows, Mean curvature flow, Allen--Cahn equation
	\\
	\textbf{Mathematical Subject Classification:} 35A15, 35K57, 35K93, 53E10, 74N20
	\end{abstract}

	\section{Introduction}
	
	\subsection{History and main results}
	
	Multiphase mean curvature flow is an important geometric evolution equation 
	which 
	has been studied for a long time, bearing not only mathematical importance, but 
	also for the applied sciences. Originally it was proposed to study the 
	evolution of grain boundaries in annealed recrystallized metal, as described by
	Mullins in \cite{mullins_two_dimensional_motion_of_idealized_grain_boundaries}, 
	who cites Beck in \cite{beck_metal_interfaces} as already having observed such 
	a behaviour in 1952. 
	
	Over the years numerous different solution concepts for multiphase mean 
	curvature flow have been proposed. Classically we have smooth solutions, where 
	we require the evolution of the interfaces to be smooth, for example described 
	by Huisken 
	in \cite{huisken_asymptotic_behavior_for_singuliarities_of_mcf}. 
	Another description of smoothly evolving mean curvature flow can be found in 
	the work of Gage and Hamilton 
	\cite{gage_hamilton_the_heat_equation_shrinking_convex_plane_curves}, who 
	proved the \enquote{shrinking conjecture} for convex planar curves.
	Brakke describes in his book 
	\cite{brakke_kenneth_motion_of_surface_by_mean_curvature} the motion by mean 
	curvature using varifolds, which yields a quite abstract and general notion for 
	mean curvature flow and is based on the gradient flow structure of mean 
	curvature flow. 
	Luckhaus and Sturzenhecker introduced a 
	distributional solution concept for mean curvature flow in their work 
	\cite{luckhaus_sturzenhecker_implicit_time_discretization_for_mcf}. Another 
	approach is the viscosity solution concept presented in 
	\cite{chen_giga_goto_uniqueness_and_existence_of_generalized_mcf_equations} and
	\cite{evans_spruck_motion_of_level_sets_by_mean_curvature}, where it is shown 
	that solutions of a certain parabolic equation have the property that if they 
	are smooth, the corresponding level sets move by mean curvature.
	
	For some smooth potential $ W 
	\colon \mathbb{ R }^{ N } \to [ 0 , \infty ) $ with 
	finitely many zeros $ \alpha_{ 1 } , \dotsc, \alpha_{ P } $, the Allen--Cahn 
	equation
	\begin{equation}
	\label{ac_intro}
	\partial_{ t } u_{ \varepsilon }
	=
	\Delta u_{ \varepsilon }
	-
	\frac{ 1 }{ \varepsilon^{ 2 } }
	\nabla W ( u_{ \varepsilon } )
	\end{equation}
	is commonly used as a phase-field approximation for mean curvature flow and was 
	first proposed by Allen and Cahn in their paper 
	\cite{allen_cahn_microscopig_theory_for_antiphase_boundary_motion}.
	Since then it has been an extensively researched topic, both from a numerical 
	and 
	analytical standpoint. 
	
	If we consider the two-phase case, which corresponds to setting $ N = 1 $ and $ 
	P = 2 $, then 
	the behaviour of the solutions to (\ref{ac_intro}) as $ \varepsilon $ tends to 
	zero are thoroughly researched.
	The authors Bronsard and Kohn showed in 
	\cite{bronsard_kohn_motion_by_mean_curvature_as_singular_limit} the compactness 
	of solutions and regularity for the limit by exploiting the gradient flow 
	structure of (\ref{ac_intro}), more precisely by mainly utilizing the 
	corresponding energy dissipation inequality. Moreover they studied the 
	behaviour 
	for radially symmetric initial data and showed that the limit moves by mean 
	curvature in this 
	case. 
	Chen has proven in 
	\cite{chen_generation_and_propagation_of_interfaces_for_reaction_diffusion_equations}
	that as long as the interface of the limit evolves smoothly, we have 
	convergence to (classical) mean curvature flow. 
	A similar result has been shown 
	by De Mottoni and Schatzmann in 
	\cite{de_mottoni_schatzmann_geometrical_evolution_of_developed_interfaces}, 
	where they showed short time convergence. Their strategy was to prove through a 
	spectral estimate that the solution $ u_{ 
		\varepsilon } $ already coincides with the formal asymptotic expansion up to an 
	error. Similar strategies, but for nonlinear Robin 
	boundary conditions with angle close to 90 degrees, can be found in the paper
	\cite{abels_moser_convergence_of_ac_with_nonlinear_robin_boundary_condition_to_mcf}
	by Abels and Moser.
	
	Ilmanen made fundamental contributions in 
	\cite{ilmanen_convergence_of_ac_to_brakkes_mcf} by proving the convergence to 
	Brakke's mean curvature flow as long as the initial conditions are 
	well-prepared by exploiting the equipartition of energies. However his methods 
	seem to only work in the two-phase case since he uses a comparison principle 
	whose generalization to the vectorial case is not clear.
	
	The multiphase case for both the convergence of the Allen--Cahn equation and 
	mean 
	curvature flow is much more involved and a topic of current research. For the 
	convergence of the Allen--Cahn equation, asymptotic expansions  with Neumann 
	boundary conditions 
	have been studied by Keller, Rubinstein and Sternberg in 
	\cite{keller_rubinstein_sternberg_fast_reaction_slow_diffusion}. Even though 
	they considered the vector-valued Allen--Cahn equation with a multiwell 
	potential, their analysis restricts to the parts of the interfaces where no 
	triple junctions appear. Bronsard and 
	Reitich however considered in 
	\cite{bronsard_reitich_on_three_phase_boundary_motion_and_singular_limit} a 
	formal asymptotic expansion which also takes triple junctions into account
	for the three-phase case. Moreover they proved short-time existence for the 
	three-phase boundary problem.
	
	The analysis of multiphase mean curvature flow has been studied for example by 
	Mantegazza, Novaga and Tortorelli in 
	\cite{mantegazza_novaga_tortorelli_motion_by_curvature_of_planar_networks}, 
	where they considered the planar case when only a single triple junction 
	appears, and their work has been extended to several triple junctions in 
	\cite{mantegazza_novaga_pluda_schule_evolution_of_networks_with_multiple_junctions}.
	Ilmanen, Neves and Schulze furthermore proved in 
	\cite{ilmanen_neves_schulze_on_short_time_existence_for_the_planar_network_flow}
	that even for non-regular initial data in the sense that Herring's angle 
	condition is not satisfied, we have short time existence by approximation 
	through regular networks. For long time existence, it has been shown by Kim and 
	Tonegawa in \cite{kim_tonegawa_on_the_mean_curvature_flow_of_grain_boundaries} 
	through a modification of Brakke's approximation scheme that one can obtain 
	non-trivial mean curvature flow even with singular initial data. Recently 
	Stuvard and Tonegawa improved this result in 
	\cite{stuvard_tonegawa_on_the_existence_of_canonical_multi_phase_brakke_flow},
	where they extended their findings to more general initial data and also showed 
	that their constructed Brakke flow is a $ \bv $-solution to mean curvature flow.

	The first main goal of this paper \Cref{convergence_to_de_giorgis_multiphase_mcf} is to prove a conditional convergence result 
	of solutions to the vectorial Allen--Cahn equation (\ref{ac_intro}) to a De 
	Giorgi type $ \bv $-solution of multiphase mean curvature flow in the sense of 
	\Cref{de_giorgi_solution_to_mmcf}.
	The proof is based mainly on a duality 
	argument and the results of Laux and Simon in 
	\cite{convergence_of_allen_cahn_equation_to_multiphase_mean_curvature_flow}.
	However the strong assumption we make here is that the Cahn--Hilliard energies 
	of the solutions to the Allen-Cahn equation (\ref{ac_intro}) converge to 
	the perimeter functional (\ref{definition_of_multiphase_energy}) applied to the 
	limit. 
	This prevents that as $ \varepsilon $ tends to zero, the approximate interfaces 
	collapse, which corresponds to a loss of energy in the limit $ \varepsilon \to 0 
	$, see also the discussion in
	\Cref{subsection_de_giorgi_type_varifold_solutions_for_mcf}.  In general we are 
	inclined to believe that this assumption could fail. For example it has been 
	shown by Bronsard and Stoth in 
	\cite{bronsard_stoth_on_the_existence_of_high_multiplicity_interfaces}
	that for the volume preserving Allen--Cahn equation and for radial-symmetric 
	initial data, we can have any number of higher 
	multiplicity transition layers which are at most $ C \varepsilon^{ \alpha } $ 
	apart, at least for times of order one.
	Here $ \alpha $ is some exponent between zero and one third.
	However it has been proven that in some cases, the energy convergence automatically holds for suitable initial data. For example Nguyen and Wang proved in 
	\cite{nguyen_wang_brakke_regularity_for_the_allen_cahn_flow}
	that for mean convex initial data, we obtain the desired convergence of energies. 
	In the context of the Almgren-Taylor-Wang scheme for mean curvature flow, this has also been done by De Philippis and Laux in \cite{de_philippis_laux_implicit_time_discretization_for_the_mean_curvature_flow_of_mean_convex_sets}. 
	
	The energy 
	convergence assumption provides us with the important equipartition of 
	energies, whose proof under milder assumption was the main obstacle of Ilmanen 
	in \cite{ilmanen_convergence_of_ac_to_brakkes_mcf}. Moreover it is the key for 
	our localization estimates and lets us localize on the different phases of the 
	limit. Lastly it assures that the differential $ \nabla u_{ 
		\varepsilon } $ can locally up to an error be written as a rank-one matrix. In 
	fact it
	is the tensor of the approximate frozen unit normal and the gradient of the 
	geodesic 
	distance function associated to the majority phase evaluated at $ u_{ 
		\varepsilon } $, see the proof of \cite[Prop.~3.1]{convergence_of_allen_cahn_equation_to_multiphase_mean_curvature_flow}.
	
	The second main goal is to compare De Giorgi type $ \bv $-solutions to De 
	Giorgi type varifold solutions.
	The latter were 
	proposed by Hensel and Laux in 
	\cite{hensel_laux_varifold_solution_concept_for_mean_curvature_flow}. We will 
	show that the De Giorgi type $ \bv $-solution concept is stronger in the sense 
	that every $ \bv 
	$-solution is also a varifold solution. 
	Since Hensel and Laux have shown 
	weak-strong uniqueness for their varifold solution concept, it follows that we 
	also obtain weak-strong uniqueness for our De Giorgi type $ \bv $-solution 
	concept, which is the best we can expect. 
	In fact it has been shown on a 
	numerical basis by Angenent, Chopp and Ilmanen in 
	\cite{angenent_chopp_ilmanen_a_computed_example_of_nonuniqueness_of_mcf}
	that even in three dimensions, there exists a smooth hypersurface whose 
	evolution by 
	mean curvature 
	flow admits a singularity at a certain time after which we have nonuniqueness. 
	A rigorous analysis of this phenomenon has been done in dimensions 4, 5, 6, 7 
	and 8 by Angenent, Ilmanen and Velázquez in 
	\cite{angenent_ilmanen_velázquez_fattening_from_smooth_initial_data_in_mcf}.
	
	Let us also mention some of the closely related unanswered questions. For one 
	Hensel and Laux have shown in 
	\cite{hensel_laux_varifold_solution_concept_for_mean_curvature_flow}
	that in the two-phase case and under well prepared initial conditions, the
	solutions of the Allen--Cahn equation (\ref{ac_intro}) converge to a De Giorgi 
	type varifold solution. However their methods have no obvious generalization to 
	the multiphase case without the energy convergence assumption, and one even 
	struggles to find an approximate sequence 
	which constructs the desired varifolds. And even then one would have to find 
	suitable substitutions for localization which will be crucial for our proof. 
	These localization estimates are based on De Giorgi's structure 
	theorem and thus only work for $ \bv $-functions.
	
	Another possible question would be how to generalize the results to the case 
	of arbitrary mobilities: Throughout the paper, and also the main background 
	papers 
	\cite{convergence_of_allen_cahn_equation_to_multiphase_mean_curvature_flow},
	\cite{hensel_laux_varifold_solution_concept_for_mean_curvature_flow}, it is 
	always assumed that the mobilities are fixed through the relation $ \mu_{ i j } 
	= 1/ \sigma_{ i j } $.
	Here $ \sigma_{ i j } $ denotes the surface tension of 
	the $ ( i , j ) $-th interface and $ \mu_{ i j } $ its mobility. As proposed by
	Bretin, Danescu, Penuelas and Masnou in 
	\cite{bretin_dansecu_penuelas_masnou_a_metric_based_approach_to_mmcf_with_mobilities},
	passing
	to arbitrary mobilities should 
	amount to multiplying an appropriate \enquote{mobility matrix} $ M $ onto the 
	right-hand side of (\ref{ac_intro}) and changing the metric of the underlying 
	space accordingly to $ \langle u , v \rangle = \int \inner*{M u}{ v} \dd{ x } $.
	The difference in their approach is to first uncouple their system so that 
	they arrive at the scalar Allen--Cahn equation and then couple the components 
	through a Lagrange-multiplier, which assures that the limit is a partition.
	
	\subsection{Structure of the paper}
	
	In \Cref{section_gf_and_mcf} we first describe the concept of gradient flows in an easy setting. Moreover we derive De Giorgi's 
	optimal energy dissipation inequality in a simple example and discuss its 
	usefulness for 
	reformulating the gradient flow equation. Further we apply our 
	observations 
	to (multiphase) mean curvature flow.
	
	Continuing with 
	\Cref{section_convergence_of_ac_equation_to_an_evolving_parititon_article}, we take a look at the Allen--Cahn equation and the convergence of its solutions as $ 
	\varepsilon $ tends to zero to an evolving partition by citing the results of \cite{convergence_of_allen_cahn_equation_to_multiphase_mean_curvature_flow}.
	
	Lastly in \Cref{chapter_de_giorgis_mcf} we present new results 
	building on the previous insights. We start off by presenting a De Giorgi type 
	$ \bv $-solution concept for multiphase mean curvature flow. This is followed 
	by proving a similar conditional convergence result as in the previous section. 
	In the final 
	\Cref{subsection_de_giorgi_type_varifold_solutions_for_mcf}, we discuss the 
	assumption of energy convergence. Moreover we show that every De 
	Giorgi type $ \bv $-solution is a De Giorgi type varifold solution in the sense 
	of Hensel and Laux in 
	\cite{hensel_laux_varifold_solution_concept_for_mean_curvature_flow}, whose 
	solution concept does not rely on the assumption of energy convergence.
	
	\subsection{Notation}

	\begin{itemize}[leftmargin=*]
		\item	
		Let $ u \colon \mathbb{ R }^{ d } \to \mathbb{ R }^{ N } $ be 
		differentiable at a point $ x \in \mathbb{ R }^{d } $. We write $ \diff 
		u ( x ) $ for the (total) derivative at $ x $, which means that
		$ \diff u ( x ) \in \mathbb{ R }^{ N \times d } $ and
		$ \left( \diff u ( x ) \right)_{ i j } = \partial_{ x_{ j } } u^{ i } ( 
		x ) $. We always use the notation $ \nabla u ( x ) $ for the transpose 
		of the total derivative. 
		
		\item
		If $ u \colon ( 0 , T ) \times \mathbb{ R }^{ d } \to \mathbb{ R 
		}^{ N } $, then we denote by $ \diff u (t, x ) $  respectively $ \nabla 
		u ( 
		t,  x ) $ only the derivatives in space, and we always write $ 
		\partial_{ 
			t } u $ for the derivative in time.
		
		\item
		We use the symbol $ \lesssim $ if an inequality holds up to a positive 
		constant on the right hand side. This constant must only depend on the 
		dimensions, the chosen potential $ W $ and possibly the given initial data.
		
		\item 
		The bracket $ \inner*{\:\cdot\:}{\:\cdot\:} $ is used as the Euclidean 
		inner 
		product. Depending on the situation, it acts on vectors or matrices.
		If we apply $ \abs{ \:\cdot\: } $ to a vector or matrix, then we always 
		uses the norm induced by the Euclidean inner product.
		
		\item 
		We denote by $ \cont_{ \mathrm{c} } $ the compactly supported 
		continuous functions. Note that for the flat torus $ \flattorus $, we 
		have $ \cont_{ \mathrm{c} } ( \flattorus ) = \cont ( \flattorus ) $.
		
		\item
		For a locally integrable function $ u \colon ( 0 , T ) \times \Omega 
		\to \mathbb{ R } $ 
		with $ \Omega \subseteq \mathbb{ R }^{ d } $ being some open set or the 
		flat torus, we denote the total variation in space for a time $ t \in ( 
		0 , T ) $ by
		\begin{equation*}
		\abs{ \nabla u ( t , \cdot ) }
		\coloneqq
		\sup 
		\left\{
		\int_{ \Omega }
		u ( t , x ) \divg \xi ( x ) 
		\dd{ x }
		\, \colon \,
		\xi \in \cont_{ \mathrm{c} }^{ 1 } ( \Omega ; \mathbb{ R }^{ d 
		} ) , \abs{ \xi } \leq 1
		\right\}
		\end{equation*}	
		and the total variation in space taken both over time and space by
		\begin{align*}
		& \abs{ \nabla u }_{ d + 1 }
		\\
		\coloneqq{} &
		\left\{
		\int_{ ( 0 , T ) \times \Omega }
		u ( t , x ) \divg_{ x } \xi ( t , x ) 
		\dd{ ( t, x ) }
		\, \colon \,
		\xi \in \cont_{ \mathrm{c} }^{ 1 } (( 0 , T ) \times \Omega ; 
		\mathbb{ R }^{ d 
		} ) , \abs{ \xi } \leq 1
		\right\}.
		\end{align*}
		
		\item 
		If $ u \colon ( 0 , T ) \times \Omega \to \mathbb{ R }^{ N } $ is some 
		map, then for a given time $ t \in ( 0 , T ) $, we sometimes write $ 
		u ( t ) $ for the map $ u ( t ) ( x ) \coloneqq u ( t , x ) $ by a 
		slight abuse of notation.
		
		\item
		To shorten notation, we always write $ \int \dd{ x } = \int_{ 
			\flattorus } \dd{ x } $, where $ \flattorus $ is the flat torus.
	\end{itemize}

	%\mainmatter	

	% \include{chapter_mcf}
	
	\section{Gradient flows and mean curvature flow}
	\label{section_gf_and_mcf}
	\subsection{Gradient flows}
	\label{section_gradient_flows}
	In the simplest case, a gradient flow of a given energy $ \energy \colon 
	\mathbb{ R }^{ N } \to \mathbb{ R } $ with respect to the Euclidean inner 
	product is a solution to the ordinary differential equation
	\begin{equation}
	\label{gf_basic_equation}
	\dv{ t } x ( t ) = - \nabla \energy ( x ( t ) ) ,
	\end{equation}
	where we usually prescribe some initial value $ x ( 0 ) = x_{ 0 } \in \mathbb{ 
		R }^{ N } $. The central structure here is that on the right hand side of the 
	equation, we do not have an arbitrary vector field, but the gradient of some 
	continuously 
	differentiable function. Remember that the gradient of a function always 
	depends on the chosen metric of our space. In fact, the gradient $ \nabla 
	\energy ( x ) $ is always the unique element of the tangent space at $ x $ 
	which 
	satisfies
	\begin{equation*}
	\inner*{ \nabla \energy ( x ) }{ y }
	=
	\dd_{ x } \energy ( y ) 
	\end{equation*}
	for all elements $ y $ in the tangent space at $ x $. In this simple setting, the 
	tangent space at any point is exactly $ \mathbb{ R }^{ N } $.  
	
	A solution $ x $ moves in the 
	direction of the steepest descent of the energy $ \energy $. Moreover this 
	allows for the following computation given a continuously differentiable 
	solution $ x $ of (\ref{gf_basic_equation}):
	\begin{align*}
	\dv{ t } \energy ( x ( t ) ) 
	& =
	\inner*{ \nabla \energy ( x ( t ) ) }{ x ' ( t ) }
	=
	- \abs{ \nabla \energy ( x ( t ) ) }^{ 2 }
	=
	- \abs{ x' ( t ) }^{ 2 }
	\\
	& =
	- \frac{ 1 }{ 2 }
	\left(
	\abs{ x'( t ) }^{ 2 }
	+
	\abs{ \nabla \energy ( x ( t ) ) }^{ 2 }
	\right).
	\end{align*}
	We especially obtain that the function $ \energy ( x ( t ) ) $ is 
	non-increasing, which coincides with our intuition of the steepest descent. But 
	more precisely, we obtain from the 
	fundamental theorem of calculus the \emph{energy dissipation identity}
	\begin{equation}
	\label{basic_energy_dissipation_identity}
	\energy ( x ( T ) )
	+
	\frac{ 1 }{ 2 }
	\int_{ 0 }^{ T }
	\abs{ x' ( t ) }^{ 2 }
	+
	\abs{ \nabla \energy ( x ( t ) ) }^{ 2 }
	\dd{ t }
	= 
	\energy ( x ( 0 ) ).
	\end{equation}
	One could now raise the question if this identity already characterizes 
	equation (\ref{gf_basic_equation}). 
	But as it turns out, we can go even one step further. Namely we only ask for 
	\emph{De Giorgi's optimal energy dissipation inequality} given by
	\begin{equation}
	\label{basic_optimal_energy_dissipation_inequality}
	\energy ( x ( T ) )
	+
	\frac{ 1 }{ 2 }
	\int_{ 0 }^{ T }
	\abs{ x' ( t ) }^{ 2 }
	+
	\abs{ \nabla \energy ( x ( t ) ) }^{ 2 }
	\dd{ t }
	\leq
	\energy ( x ( 0 ) ).
	\end{equation}
	We call this inequality optimal since as demonstrated before, we usually expect 
	an equality to hold if $ x $ is a solution to (\ref{gf_basic_equation}).
	Now let us assume that $ x $ satisfies inequality
	(\ref{basic_optimal_energy_dissipation_inequality}) and is sufficiently regular.
	Then we can estimate again by the fundamental theorem of calculus that
	\begin{align*}
	& \frac{ 1 }{ 2 }
	\int_{ 0 }^{ T }
	\abs{
		\nabla \energy ( x ( t ) )
		+
		x' ( t ) 
	}^{ 2 }
	\dd{ t }
	\\
	={} &
	\int_{ 0 }^{ T }
	\inner*{ \nabla \energy ( x ( t ) ) }{ x' ( t ) }
	\dd{ t }
	+
	\frac{ 1 }{ 2 }
	\int_{ 0 }^{ T }
	\abs{ x' ( t ) }^{ 2 }
	+
	\abs{ \nabla \energy ( x ( t ) ) }^{ 2 }
	\dd{ t }
	\\
	={} &
	\energy ( x ( T ) ) - \energy ( x ( 0 ) ) 
	+
	\frac{ 1 }{ 2 }
	\int_{ 0 }^{ T }
	\abs{ x' ( t ) }^{ 2 }
	+
	\abs{ \nabla \energy ( x ( t ) ) }^{ 2 }
	\dd{ t }
	\\
	\leq{} & 0.
	\end{align*}
	Since we started with an integral over a non-negative function, this implies 
	that for almost every time $ t $, we have that $ x' ( t ) = - \nabla \energy ( 
	x ( t ) ) $. But if $ x $ and $ E $ are sufficiently regular, this already 
	implies that $ x' ( t ) = - \nabla \energy ( x ( t ) ) $ holds for all times $ 
	t 
	$.
	Thus $ x $ is a gradient flow of 
	the energy $ \energy $.
	
	The real strength of formulating the differential equation 
	(\ref{gf_basic_equation}) via inequality 
	(\ref{basic_optimal_energy_dissipation_inequality}) becomes clear if we want to 
	consider gradient flows in a more complicated setting. In order to formulate 
	equation (\ref{gf_basic_equation}), we need to have a notion of differentiation 
	and a gradient in the target of $ x $. Therefore we may use pre-Hilbert spaces 
	or smooth Riemannian manifolds as suitable substitutes for $ \mathbb{ R }^{ N 
	} $. 
	
	Examples for such more complicated gradient flows include the heat equation, 
	which can be written as the $ \lp^{ 2 } $-gradient flow of the Dirichlet 
	energy. 
	Further examples include  
	the Fokker-Planck equation, the Allen--Cahn equation and mean curvature flow. 
	The two latter of course play a key role for us.
	
	The observations in this section have been around for a long time and are 
	credited 
	to De Giorgi and his paper 
	\cite{de_giorgi_new_problems_on_minimizing_movements}. Sandier and Serfaty have 
	also written an excellent paper 
	\cite{sandler_serfaty_gamme_convergence_of_gf_with_applications_to_gl} on this 
	topic, as well as the book 
	\cite{ambrosio_gigli_savare_gradient_flows_in_metric_spaces_and_in_the_space_of_prob_measures}
	by Ambrosio, Gigli and 
	Savaré,
	to which we refer the 
	interested reader.
	
	\subsection{Mean curvature flow}
	\label{section_mcf}
	
	Mean curvature flow describes the geometric evolution of a set $ ( \Omega ( t ) 
	)_{ t \geq 0 } $ respectively the evolution of its boundary $ \Sigma ( t ) 
	\coloneqq \partial \Omega ( t ) $. It is formulated through the equation
	\begin{equation}
	\label{mcf_twophase_basic_equation}
	\frac{ 1 }{ \mu } V = - \sigma H 
	\quad
	\text{on }
	\Sigma.
	\end{equation}
	Here $ \mu > 0 $ is a positive constant which is called the \emph{mobility} and 
	$ \sigma > 0 $ is a positive constant as well which we define as the 
	\emph{surface tension}. By $ V $ we denote the normal velocity of the set and 
	by $ H $ its mean curvature, which is defined as the sum of the principle 
	curvatures at a given point.
	There are a lot of different notions of solutions to this equation as already 
	mentioned in the introduction.
	For us the central structure of this equation is its gradient flow structure. Formally we want to consider the space
	\begin{equation*}
	\mathcal{ M }
	\coloneqq
	\left\{
	\text{hypersurfaces in} \flattorus
	\right\},
	\end{equation*}
	where the tangent space at a given $ \Sigma \in \mathcal{ M } $ consists of the normal velocities on $ \Sigma $.
	The metric tensor at a surface $ \Sigma $ of two normal velocities is then 
	given by the rescaled $ \lp^{ 2 } $-inner product on $ \Sigma $
	\begin{equation}
	\label{definition_of_metric_twophase}
	\inner*{ V }{ W }_{ \Sigma }
	\coloneqq
	\frac{ 1 }{ \mu }
	\int_{ \Sigma }
	V W
	\dd{ \hm^{ d - 1 } }
	\end{equation}
	and our energy will simply be the rescaled perimeter functional
	\begin{equation*}
	\energy ( \Sigma )
	\coloneqq
	\sigma \hm^{ d -1 } ( \Sigma ).
	\end{equation*}
	By \cite[Thm.~17.5]{maggi_sets_of_finite_perimeter} the first inner variation 
	of the perimeter functional is given by the mean curvature vector. 
	Remember moreover that by the definition of the metric 
	(\ref{definition_of_metric_twophase}) the gradient of the energy at a given 
	hypersurface $ \Sigma $ has to satisfy
	\begin{equation*}
	\frac{ 1 }{ \mu }
	\int_{ \Sigma }
	\inner*{ \nabla_{ \Sigma } \energy }{ V }
	\dd{ \hm^{ d - 1 } }
	=
	\dd_{ \Sigma }
	\energy ( V )
	\end{equation*} 
	for all normal vector fields $ V $ on $ \Sigma $.
	Combining these arguments the gradient of the energy should simply be the mean 
	curvature vector multiplied by $ \sigma \mu $.
	Thus the mean curvature flow equation (\ref{mcf_twophase_basic_equation}) 
	corresponds exactly to the gradient flow equation (\ref{gf_basic_equation}).
	Note however that this metric tensor induces a degenerate metric in the sense 
	that the distance between any two hypersurfaces is zero, which has been shown 
	by Michor and Mumford 
	in \cite{michor_mumford_riemannian_geometries_on_spaces_of_plane_curves}. 
	
	In \Cref{section_gradient_flows} we highlighted the importance of
	De Giorgi's optimal energy dissipation inequality (\ref{basic_optimal_energy_dissipation_inequality}). In our setting  this translates to the inequality
	\begin{align*}
	& \energy ( \Sigma ( T ) )
	+
	\frac{ 1 }{ 2 }
	\int_{ 0 }^{ T }
	\inner*{ V ( t ) }{ V ( t ) }_{ \Sigma ( t ) }
	+
	\inner*{ \nabla_{ \Sigma ( t ) } E }{ \nabla_{ \Sigma ( t ) } E }_{ 
		\Sigma ( t ) }
	\dd{ t }
	\\
	={}
	&\energy ( \Sigma ( T ) )
	+
	\frac{ 1 }{ 2 }
	\int_{ 0 }^{ T }
	\int_{ \Sigma (t ) }
	\frac{ 1 }{ \mu }
	V( t )^{ 2 } 
	+
	\sigma^{ 2 } \mu 
	H( t ) ^{ 2 }
	\dd{ \hm^{ d - 1 } }
	\dd{ t }
	\\
	\leq{} &
	\energy ( \Sigma ( 0 ) ).
	\end{align*}
	This is the main motivation for the definition of De Giorgi type solutions to 
	mean curvature flow in the two-phase case, see 
	\Cref{de_giorgi_solution_to_mmcf} and 
	\Cref{de_giorgi_varifold_solution_for_mcf}.
	
	Far more important for us shall however be multiphase mean curvature flow, 
	which is of high importance in the applied sciences and mathematically quite 
	complex. 
	Essentially instead of just considering the evolution of one single set, we 
	look at the evolution of a partition of the flat torus and require that at each 
	interface of the sets, the mean curvature flow equation 
	(\ref{mcf_twophase_basic_equation}) is satisfied. Moreover we require a 
	stability condition at points where three interfaces meet.
	
	More precisely we say that a partition $ ( \Omega_{ i } ( t ) )_{ i = 1 , 
		\dotsc , P } $ with 
	$ \Sigma_{ i j } \coloneqq \partial \Omega_{ i } \cap \partial \Omega_{ j } $ 
	satisfies multiphase mean curvature flow with mobilities $ \mu_{ i j } $ and 
	surface 
	tensions $ \sigma_{ i j } $ if 
	\begin{align}
	\label{v_is_equal_to_h}
	\frac{ 1 }{ \mu_{ i j } }
	V_{ i j }
	& =
	-
	\sigma_{ i  j }
	H_{ i j }
	\quad
	&\text{on } \Sigma_{i j }\text{ for all }i\neq j \text{ and}
	\\
	\label{herrings_angle_condition}
	\sigma_{ i j }
	\nu_{ i j }
	+
	\sigma_{ j k }
	\nu_{ j k }
	+
	\sigma_{ k i }
	\nu_{ k i }
	& =
	0
	& \text{at triple junctions}.
	\end{align}
	The second equation is a stability condition when more than two sets meet and is
	called \emph{Herring's angle condition}. Here $ \nu_{ i j } $ is the outer unit 
	normal of $ \Omega_{ i } $
	on $ \Sigma_{ i j } $ pointing towards $ \Omega_{ j } $. 
	One could argue that we would also want stability conditions at for example 
	quadruple 
	junctions. In two dimensions though, such quadruple junctions are expected to 
	immediately dissipate. In higher dimensions, quadruple junctions might become 
	stable, but only on lower dimensional sets, and shall thus not be relevant for 
	us.
	
	As our space, we shall therefore now consider tuples of hypersurfaces in $ 
	\flattorus $. 
	The energy and metric tensor are given by
	\begin{align}
	\label{definition_of_multiphase_energy}
	\energy ( \Sigma )
	&\coloneqq
	\sum_{ i < j }
	\sigma_{ i j }
	\hm^{ d- 1 } ( \Sigma_{ i j } )
	\shortintertext{and}
	\notag
	\inner*{ V }{ W }_{ \Sigma }
	& \coloneqq
	\sum_{ i < j }
	\frac{ 1 }{ \mu_{ i j } }
	\int_{ \Sigma_{ i j } }
	V_{ i j }
	W_{ i j }
	\dd{ \hm^{ d - 1 } }.
	\end{align}
	Of course this again produces a degenerate metric.
	Using a variant of the divergence theorem on surfaces 
	(\cite[Thm.~11.8]{maggi_sets_of_finite_perimeter}) and again the computation 
	for the first variation of the perimeter 
	(\cite[Thm.~17.5]{maggi_sets_of_finite_perimeter}), we see that multiphase mean 
	curvature flow has the desired gradient flow structure.
	In this case De Giorgi's 
	inequality (\ref{basic_optimal_energy_dissipation_inequality}) translates to
	\begin{equation*}
	\energy ( \Sigma ( T ) )
	+
	\frac{ 1 }{ 2 }
	\sum_{ i < j }
	\int_{ 0 }^{ T }
	\int_{ \Sigma_{ i j } }
	\frac{ 1 }{ \mu_{ i j } }
	V_{ i j }^{ 2 }
	+
	\sigma_{ i j }^{ 2 } \mu_{ i j }
	H_{ i j }^{ 2 }
	\dd{ \hm^{ d - 1 } }
	\dd{ t }
	\leq
	\energy ( \Sigma ( 0 ) ).
	\end{equation*}
	Furthermore if we make the simplifying assumption that the mobilities are 
	already determined through the surface tensions by the relation $ \mu_{ i j } = 
	1 / \sigma_{ i j } $, then this inequality becomes
	\begin{equation*}
	\energy ( \Sigma ( T ) )
	+
	\frac{ 1 }{ 2 }
	\sum_{ i < j }
	\sigma_{ i j }
	\int_{ 0 }^{ T }
	\int_{ \Sigma_{ i j } }
	V_{ i j }^{ 2 }
	+
	H_{ i j }^{ 2 }
	\dd{ \hm^{ d - 1 } }
	\dd{ t }
	\leq
	\energy ( \Sigma ( 0 ) ),
	\end{equation*}
	which motivates \Cref{de_giorgi_solution_to_mmcf} and 
	\Cref{de_giorgi_varifold_solutions_for_mmcf}.
	
	\section{Convergence of the Allen-Cahn equation to an evolving partition}
	\label{section_convergence_of_ac_equation_to_an_evolving_parititon_article}
	\subsection{Allen-Cahn equation}
	
	This chapter follows 
	\cite{convergence_of_allen_cahn_equation_to_multiphase_mean_curvature_flow}.
	Let $ \Lambda > 0 $ and define the flat torus as the quotient
	$ \flattorus \coloneqq [0, \Lambda )^{ d } = \mathbb{ R }^{ d } / \Lambda \mathbb{ Z 
	}^{ d } $, 
	which means that we impose periodic boundary conditions on $ [ 0 , \Lambda )^{ d } $.
	Then for 
	$ u \colon [ 0 , \infty ) \times \flattorus \to \mathbb{ R }^{ N } $ 
	and some smooth potential 
	$ W \colon \mathbb{ R }^{ N } \to [0, \infty ) $,
	the \emph{Allen--Cahn equation} with parameter $ \varepsilon > 0 $ is given by
	\begin{equation}
	\label{allen_cahn_eq}
	\partial_{ t } u 
	=
	\Delta u - \frac{1 }{ \varepsilon^{ 2 } } \nabla W ( u ).
	\end{equation}
	To understand this equation better, we consider the \emph{Cahn--Hilliard 
		energy}, which assigns to $ u $ for a fixed time the value
	\begin{equation}
	\label{cahn_hilliard_energy}
	\energy_{ \varepsilon } 
	(u)
	\coloneqq
	\int
	\frac{ 1 }{ \varepsilon }
	W ( u )
	+
	\frac{ \varepsilon }{ 2 }
	\abs{ \nabla u }^{ 2 }
	\dd{x}.
	\end{equation}
	As it turns out the partial differential equation 
	(\ref{allen_cahn_eq}) is the $ \lp^{ 2 } $-gradient flow rescaled by $ 
	\sqrt{\varepsilon} $ of the Cahn--Hilliard 
	energy. Thus a solution to 
	(\ref{allen_cahn_eq}) can be constructed via De Giorgi's minimizing movements scheme 
	(\cite{de_giorgi_new_problems_on_minimizing_movements}).
	
	But first we need to clarify what our potential $ W $ should look like. Classical
	examples in the scalar case are given by $ W ( u ) = \left( u^{ 2 } - 1 
	\right)^{ 2 } $ or $ W( u ) = u^{ 2 } ( u - 1 )^{ 2 } $, and we call functions 
	like these \emph{doublewell potentials}, see also 
	\Cref{graph_of_doublewell_potential}.
	
	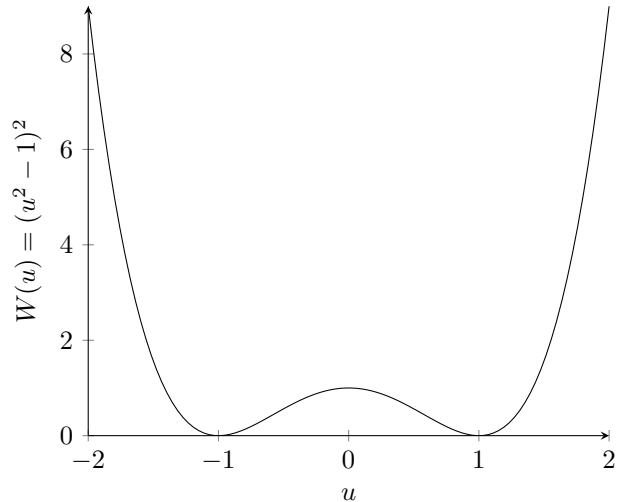
\begin{figure}
		\centering
		\begin{tikzpicture}
		\begin{axis}[
		axis lines = left,
		xlabel = \(u\),
		ylabel = {\(W(u)= (u^2 - 1)^{ 2 }\)},
		]
		%Below the red parabola is defined
		\addplot [
		domain=-2:2, 
		samples=100, 
		color=black,
		]
		{x^4-2*x^2 + 1};
		\end{axis}
		\end{tikzpicture}
		\caption{Plot of the doublewell potential $ W(u) = (u^2 - 1 )^{ 2 } $}
		\label{graph_of_doublewell_potential}
	\end{figure}
	
	In higher dimensions, we want to accept potentials $ W \colon 
	\mathbb{ R }^{ N } \to [0, \infty ) $ which are smooth multiwell potentials, 
	meaning that they have finitely many zeros at $ u = \alpha_{ 1 }, \dotsc , 
	\alpha_{ P } \in \mathbb{ R }^{ N } $. Furthermore we ask for polynomial growth 
	in the sense that there exists some $ p \geq 2 $ such that for all $ u $ 
	sufficiently large, we have
	\begin{equation}
	\label{polynomial_growth}
	\abs{ u }^{ p } \lesssim W(u) \lesssim \abs{ u }^{ p }
	\end{equation}
	and
	\begin{equation}
	\label{polynomial_growth_derivative}
	\abs{ \nabla W ( u ) } \lesssim \abs{ u }^{ p -1 }.
	\end{equation}
	Lastly we want $ W $ to be convex up to a small perturbation in the sense that 
	there exist smooth functions 
	$ W_{ \mathrm{conv} }$, $ W_{ \mathrm{pert} } \colon \mathbb{ R }^{ N } \to [ 0 , \infty ) $ such that
	\begin{equation}
	\label{decomposition_of_w}
	W = W_{ \mathrm{conv}} + W_{ \mathrm{pert}}\, ,
	\end{equation}
	$ W_{ \mathrm{conv} } $ is convex and
	\begin{equation}
	\label{perturbation bound}
	\sup_{ x \in \mathbb{ R }^{ N } }
	\abs{ \nabla^{ 2 } W_{ \mathrm{pert} } } < \infty.
	\end{equation}
	One can check that the above mentioned doublewell potentials satisfy these 
	assumptions. In general one may consider potentials $ W $ of the form
	\begin{equation*}
	W ( u ) \coloneqq
	\abs{ u - \alpha_{ 1 } }^{ 2 } 
	\cdot
	\dotsc
	\cdot
	\abs{ u - \alpha_{ P } }^{ 2 },
	\end{equation*}
	see also \Cref{twodimensional_potential} for the case $ P = 3 $ in two 
	dimensions.
	\begin{figure}
		\centering
		
		\begin{subfigure}[b]{0.45\linewidth}
			
			\includegraphics[width=\textwidth]{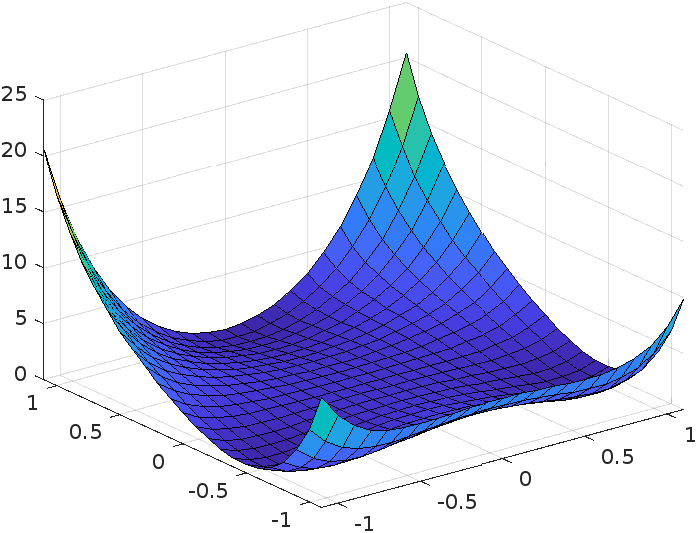}
			\caption{Graph of the potential}
		\end{subfigure}
		\hfill
		\begin{subfigure}[b]{0.45\linewidth}
			\includegraphics[width=\textwidth]{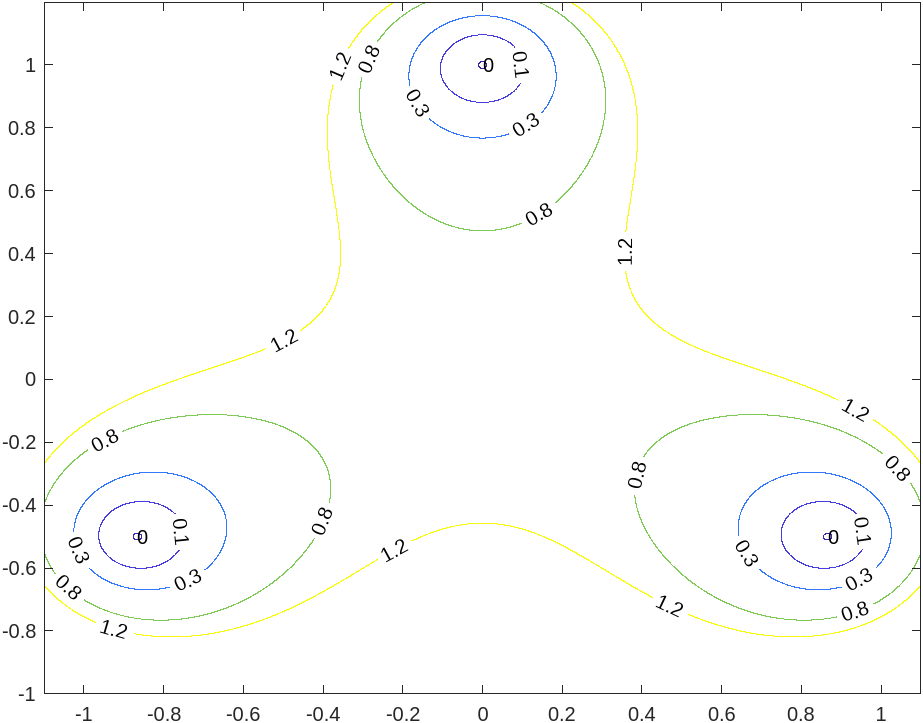}
			
			\caption{Contour lines of the potential}
		\end{subfigure}
		
		\caption{Graphics for the potential
			$ W ( u ) = 
			\abs{ u  - e^{ i \pi\frac{ 3 }{ 6 }} }^{ 2 } 
			\abs{ u  - e^{ i \pi\frac{   7}{ 6  }} }^{ 2 }
			\abs{ u  - e^{ i \pi\frac{  11}{ 6  }} }^{ 2 }$ }
		\label{twodimensional_potential}
	\end{figure} 
	The existence of the function $ W_{ \mathrm{pert} } $ can then be seen by 
	multiplying some suitable cutoff to $ W 
	$ which is equal to 1 on a sufficiently large ball.
	
	As it is custom for parabolic partial differential equations, we view solutions 
	of the Allen--Cahn equation (\ref{allen_cahn_eq}) as maps from $ [0,T] $ into 
	some suitable function space and thus use the following definition.
	
	\begin{definition}
		\label{solution_to_ac}
		We say that a function 
		\begin{equation*}
		u_{ \varepsilon} \in 
		\mathrm{ C } \left( [0 , T ] ; \lp^{ 2 } \left( \flattorus; \mathbb{ R }^{ 
			N } \right) \right) \cap
		\lp^{ \infty } \left( [0, T ]; \wkp^{ 1, 2 } ( \flattorus ; \mathbb{ R }^{ 
			N } ) \right)
		\end{equation*}
		is a weak solution of the Allen--Cahn equation (\ref{allen_cahn_eq}) with parameter $ \varepsilon > 0 $ and initial condition $ u_{ \varepsilon}^{ 0 } \in \lp^{ 2 } ( \flattorus ; \mathbb{ R }^{ N } ) $ if
		\begin{enumerate}
			\item the energy stays bounded, which means that
			\begin{equation}
			\esssup_{ 0 \leq t \leq T }
			\energy_{ \varepsilon } ( u_{ \varepsilon } ( t ) ) 
			< \infty,
			\end{equation}
			\item 
			its weak time derivative satisfies
			\begin{equation}
			\partial_{ t } u_{ \varepsilon }
			\in
			\lp^{ 2 } \left( [ 0 , T ] \times \flattorus ; \mathbb{ R }^{ N } \right),
			\end{equation}
			\item 
			\label{here_appears_p}
			for almost every $ t \in [ 0 , T ] $ and every 
			\begin{equation*}
			 \xi \in \lp^{ p } ( [ 0 , T ] \times \flattorus; \mathbb{ R }^{ N } ) 
			\cap
			\wkp^{ 1, 2 } ( [ 0, T ] \times \flattorus ; \mathbb{ R }^{ N } ),
			\end{equation*}
			we have
			\begin{equation}
			\label{ac_weak_equation}
			\int
			\inner*{ \frac{ 1 }{ \varepsilon^{ 2 } } \nabla W ( u_{ \varepsilon} ( t ) ) }{ \xi }
			+
			\inner*{ \nabla u_{ \varepsilon } ( t ) }{ \nabla \xi } 
			+
			\inner*{\partial_{ t } u_{ \varepsilon} ( t ) }{ \xi }
			\dd{x}
			=
			0,
			\end{equation}
			\item 
			the initial conditions are attained in the sense that $ u_{ \varepsilon 
			} ( 0 ) = u_{ \varepsilon}^{ 0 } $.
		\end{enumerate}
	\end{definition}
	
	\begin{remark}
		The exponent $ p $ which appears in condition \ref{here_appears_p} is the 
		same 
		exponent as for the growth assumptions (\ref{polynomial_growth}) and 
		(\ref{polynomial_growth_derivative}) of $ W $.
		Moreover we note that given our setup, we automatically have $ \nabla W ( 
		u_{ \varepsilon } ) ( t ) \in \lp^{ p' } ( \flattorus ) $. In fact we can 
		estimate
		\begin{equation}
		\abs{ \nabla W ( u_{ \varepsilon} ) }^{ p/(p-1) }
		\lesssim
		1+ \abs{u_{ \varepsilon }}^{ p }
		\lesssim
		1 + W ( u ),
		\end{equation}
		which is integrable for almost every time $ t $ since we assume that the energy stays bounded,
		thus the integral in equation (\ref{ac_weak_equation}) is well defined.
		
		Moreover we already obtain $ 1/2 $ Hölder-continuity in time from the embedding
		\begin{equation}
		\label{w12_embeds_into_c1half}
		\wkp^{ 1 , 2} \left( [ 0 , T ] ; \lp^{ 2 } \left( \flattorus;\mathbb{ R 
		}^{ N } \right) \right)
		\hookrightarrow
		\cont^{ 1/2 } \left( [ 0 , T ] ; \lp^{ 2 } \left( \flattorus ; \mathbb{ 
			R 
		}^{ N } \right) \right),
		\end{equation}
		which follows from a generalized version of the fundamental theorem of calculus and Hölder's inequality.
	\end{remark}

	With \Cref{solution_to_ac}, we are able to state our existence result for a
	solution of the Allen--Cahn equation \cite[Lemma~2.3]{convergence_of_allen_cahn_equation_to_multiphase_mean_curvature_flow} which uses De Giorgi's minimizing movements scheme.
	Note moreover that one would expect even more regularity for a solution 
	of the Allen--Cahn equation. In fact, it has been shown by De Mottoni and 
	Schatzmann in 
	\cite{de_mottoni_schatzmann_geometrical_evolution_of_developed_interfaces} that 
	in the scalar case, a solution u is 
	smooth for positive times, at least for bounded initial data. 
	Their Ansatz is a 
	variation of parameters and the 
	regularity then follows from the smoothing effect of the heat kernel. 
	
	\begin{theorem}
		\label{existence_of_ac_solution}
		Let $ u^{ 0 } \colon \flattorus \to \mathbb{ R }^{ N } $ be such that 
		$ \energy_{ \varepsilon } ( u^{ 0 } ) < \infty $.
		Then there exists a weak solution $ u_{ \varepsilon} $ to the Allen--Cahn 
		equation (\ref{allen_cahn_eq}) in the sense of \Cref{solution_to_ac} with 
		initial data $ u^{ 0 } $.
		Furthermore the solution satisfies the energy dissipation inequality
		\begin{equation}
		\label{energy_dissipation_sharp}
		\energy_{ \varepsilon } ( u_{ \varepsilon } ( t ) )
		+
		\int_{ 0 }^{ t }
		\int
		\varepsilon \abs{ \partial_{ t } u_{ \varepsilon } }^{ 2 }
		\dd{ x }
		\dd{ s }
		\leq
		\energy_{ \varepsilon } ( u^{ 0 } )
		\end{equation}
		for every $ t \in [ 0 , T ] $. We additionally have
		$
		\partial_{ i , j }^{ 2 } u_{ \varepsilon }, \nabla W ( u_{ \varepsilon } ) \in \lp^{ 2 } ( [0, T] \times \flattorus ; \mathbb{ R }^{ N } ) 
		$
		for all $ 1 \leq i, j \leq d $. In particular we can test the weak form (\ref{ac_weak_equation}) with $ \partial_{ i , j }^{ 2 } u_{ \varepsilon } $.
	\end{theorem}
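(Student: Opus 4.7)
The plan is to build the solution via De Giorgi's minimizing movements scheme adapted to the metric $\varepsilon\langle\cdot,\cdot\rangle_{L^2}$, which realizes \eqref{allen_cahn_eq} as the gradient flow of $\energy_\varepsilon$. Fix a step size $h > 0$, set $u^0_h \coloneqq u^0$, and define $u^k_h$ iteratively as a minimizer of
\[
u \mapsto \energy_\varepsilon(u) + \frac{\varepsilon}{2h}\| u - u^{k-1}_h \|_{L^2}^2
\]
over $W^{1,2}(\flattorus;\mathbb{R}^N)$. Existence of a minimizer follows by the direct method: coercivity in $W^{1,2}$ comes from the gradient part of $\energy_\varepsilon$ together with the penalty term, while lower semicontinuity along a weakly convergent minimizing sequence uses the decomposition $W = W_{\mathrm{conv}} + W_{\mathrm{pert}}$, with the convex part being weakly lower semicontinuous and the perturbation passing through strong $L^p$-convergence obtained from Rellich-Kondrachov and \eqref{polynomial_growth}.

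Computing the Euler-Lagrange equation yields the time-discrete Allen--Cahn equation $(u^k_h - u^{k-1}_h)/h = \Delta u^k_h - \varepsilon^{-2}\nabla W(u^k_h)$ in the weak sense, and comparing the minimizer with the admissible competitor $u^{k-1}_h$ gives the discrete dissipation inequality
\[
\energy_\varepsilon(u^k_h) + \frac{\varepsilon}{2h}\| u^k_h - u^{k-1}_h \|_{L^2}^2 \leq \energy_\varepsilon(u^{k-1}_h).
\]
Summation produces uniform bounds on $\energy_\varepsilon(u^k_h)$ and on the discrete time derivative in $L^2([0,T]\times\flattorus)$. I would introduce piecewise constant and piecewise affine time interpolants, extract a weak limit $u_\varepsilon$ as $h \to 0$, and use Aubin-Lions-type compactness to upgrade to strong $L^2$-convergence and a.e.\ convergence of a subsequence. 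Passing to the limit in the weak form \eqref{ac_weak_equation} and in the discrete dissipation (by weak lower semicontinuity on the left-hand side) yields a weak solution satisfying \eqref{energy_dissipation_sharp}. Convergence of $\nabla W(u_h) \to \nabla W(u_\varepsilon)$ against test functions of the class in \Cref{solution_to_ac} is obtained via Vitali's theorem from the a.e.\ convergence and the equiintegrability provided by \eqref{polynomial_growth_derivative} together with the uniform energy bound.

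For the additional regularity, the idea is to test the PDE formally with $\Delta u_\varepsilon$, rigorized through second-order spatial difference quotients, which are admissible on the torus by translation invariance. This produces
\[
\tfrac{1}{2}\tfrac{d}{dt}\!\int|\nabla u_\varepsilon|^2\,dx + \int|\Delta u_\varepsilon|^2\,dx = -\varepsilon^{-2}\!\int \nabla^2 W(u_\varepsilon)\nabla u_\varepsilon : \nabla u_\varepsilon\,dx.
\]
The right-hand side is controlled by the convexity of $W_{\mathrm{conv}}$, contributing a favorable sign, and the uniform bound \eqref{perturbation bound} on $\nabla^2 W_{\mathrm{pert}}$, whose contribution is absorbed by the energy bound and a Grönwall argument. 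This delivers $\Delta u_\varepsilon \in L^2([0,T]\times\flattorus)$, hence $\partial_{ij}^2 u_\varepsilon \in L^2$ by elliptic regularity on the torus, and consequently $\nabla W(u_\varepsilon) \in L^2$ directly from the equation.

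The main obstacle throughout is the nonlinearity $\nabla W(u)$: the lack of global convexity of $W$ makes neither weak lower semicontinuity of $\energy_\varepsilon$ nor the passage to the limit in $\nabla W(u_h)$ automatic, and both steps rest decisively on the convex-plus-perturbation structure \eqref{decomposition_of_w}. Without it the direct method and the identification of the nonlinear limit would fail, and the higher regularity argument would lose the favorable sign that lets it close.
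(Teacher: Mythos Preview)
The paper does not actually prove this theorem; it simply cites \cite[Lemma~2.3]{convergence_of_allen_cahn_equation_to_multiphase_mean_curvature_flow} and remarks that the construction ``uses De Giorgi's minimizing movements scheme.'' Your proposal is precisely that scheme, carried out in the expected way, and the argument is sound. A couple of minor remarks: for the higher regularity step you do not really need Gr\"onwall, since $\int |\nabla u_\varepsilon|^2 \dd{x} \leq 2\varepsilon^{-1}\energy_\varepsilon(u_\varepsilon)$ is already uniformly bounded by the energy dissipation inequality, so the $\nabla^2 W_{\mathrm{pert}}$ contribution is controlled outright after integrating in time; and testing with the individual second difference quotients $\partial_k^{h}\partial_k^{-h} u_\varepsilon$ (rather than only $\Delta u_\varepsilon$) gives $\partial_{ij}^2 u_\varepsilon \in \lp^2$ directly and spares you the separate appeal to elliptic regularity. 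Otherwise your outline matches the argument of the cited reference.
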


	\subsection{Convergence to an evolving partition}
	
	We now want to study of solutions to the Allen-Cahn equation (\ref{allen_cahn_eq}) as $\varepsilon$ tends to zero. 
	Let us first fix some notation. For a function $ u = \sum_{ i = 
		1 }^{ P } \mathds{ 1 }_{ \Omega_{ i } } \alpha_{ i } $ with sets of finite 
	perimeter $ \Omega_{ i } $ and the corresponding interfaces $ \Sigma_{ i j } 
	\coloneqq \partial_{ 
		\ast } \Omega_{ i } \cap \partial_{ \ast } \Omega_{ j } $, we define for a 
	given continuous function $ \varphi \in \cont ( \flattorus ) $ the 
	energy measures by
	\begin{align}
	\label{localized__epsilon_energy_multiphase}
	\energy_{ \varepsilon } ( u_{ \varepsilon } ; \varphi )
	& \coloneqq
	\int
	\varphi \left(
	\frac{ 1 }{ \varepsilon }
	W ( u_{ \varepsilon } ) 
	+
	\frac{ \varepsilon }{ 2 }
	\abs{ \nabla u_{ \varepsilon } }^{ 2 }
	\right)
	\dd{ x }
	\shortintertext{and}
	\label{localized_energy_multiphase}
	\energy ( u ; \varphi )
	& \coloneqq
	\sum_{ i < j }
	\sigma_{ i j }
	\int_{ \Sigma_{ i j } }
	\varphi
	\dd{\hm^{ d - 1 }}.
	\end{align}
	The latter is motivated through the perimeter functional introduced in 
	\Cref{section_mcf}.
	
	One of the many difficulties in the vectorial case is that there is no 
	easy choice of a primitive for $ \sqrt{ 2 W ( u ) } $ compared to the scalar 
	case. This is crucial for the classic Modica--Mortola trick to obtain $ \bv $-compactness. 
	As a suitable replacement, we consider the \emph{geodesic distance} defined as 
	\begin{align*}
	& \geodesic_distance ( u, v )
	\\
	\coloneqq{} &
	\inf
	\left\{
	\int_{ 0 }^{ 1 }
	\sqrt{ 2 W ( \gamma ) }
	\abs{ \dot{ \gamma }  }
	\dd{t}
	\,
	\colon
	\, \gamma \in \mathrm{ C }^{ 1 } \left( [0, 1 ] ; \mathbb{ R }^{ N } \right) \text{ with } \gamma( 0 ) = u,\, \gamma( 1 )= v 
	\right\}
	\end{align*}
	motivated by Baldo in \cite{baldo_minimal_interface_criterion}.
	This indeed defines a metric on $ \mathbb{ R }^{ N } $: If we have $ \geodesic_distance 
	( v, w ) = 0 $, then by the continuity of $ W $ and since it only has a 
	discrete set of zeros, we may deduce that $ v = w $. Symmetry can be seen by 
	reversing a given path between two points and the triangle inequality follows 
	from concatenation of two paths.
	
	The \emph{surface tensions} generated by $ W $  are defined as
	\begin{equation}
	\label{definition_surface_tensions}
	\sigma_{ i j } 
	\coloneqq
	\geodesic_distance ( \alpha_{ i } , \alpha_{ j } )
	\end{equation}
	and as a consequence of $ \geodesic_distance $ being a metric satisfy the 
	triangle inequality
	\begin{equation*}
	\sigma_{ i k } \leq \sigma_{ i j } + \sigma_{ j k }.
	\end{equation*}
	Furthermore $ \sigma_{ i j } $ is zero if and only if $ i $ is equal to $ j $ 
	and by symmetry, 
	$ \sigma_{ i j } $ is always the same as $ \sigma_{ j i } $.
	
	Our replacement for the primitive of $ \sqrt{ 2 W } $ is given for $ 1 \leq i \leq P $ 
	by the geodesic distance function
	\begin{equation*}
	\phi_{ i } ( u ) 
	\coloneqq
	\geodesic_distance ( \alpha_{ i } , u ).
	\end{equation*}
	Our first obstacle is the regularity of the functions $ \psi_{ \varepsilon, i } 
	\coloneqq \phi_{ i } \circ u_{ \varepsilon } $. A priori we only know that $ 
	\phi_{ i } $ is locally Lipschitz continuous on $ \mathbb{ R }^{ N } $ and thus 
	differentiable almost everywhere. If $ N = 1 $, this would already suffice 
	to deduce that $ \psi_{ \varepsilon, i  }$ is weakly differentiable.
	In higher dimensions however, $ u $ could for example move along a hypersurface 
	where $ 
	\phi_{ i } $ could in theory be nowhere differentiable since the hypersurface  
	is a Lebesgue 
	nullset. 
	This can be salvaged through the chain rule for 
	distributional derivatives by Ambrosio and Dal Maso 
	\cite[Cor.~3.2]{ambrosio_maso_chain_rule}.
	Moreover we have the crucial inequality for its gradient given by 
	\begin{equation}
		\label{bound_on_nabla_psi}
		\abs{ \nabla \psi_{ i } ( u ) }
		\leq
		\sqrt{ 2 W ( u ) }.
	\end{equation}
	We then arrive at the following convergence result, see \cite[Prop.~2.7]{convergence_of_allen_cahn_equation_to_multiphase_mean_curvature_flow}.
	
	\begin{proposition}
		\label{initial_convergence_result_multiphase}
		Let $ u_{ \varepsilon }^{ 0 } $ be well prepared initial data in the sense 
		that
		\begin{equation*}
		\lim_{ \varepsilon \to 0 }
		\energy_{ \varepsilon } ( u_{ \varepsilon }^{ 0 } ) 
		= 
		\energy ( u^{ 0 } ) 
		\eqqcolon
		\energy_{ 0 }
		< 
		\infty.
		\end{equation*}
		Then there exists for any sequence $ \varepsilon \to 0 $ some 
		non-relabelled 
		subsequence such that the solutions $ u_{ \varepsilon } $ of the 
		Allen--Cahn equation 
		(\ref{allen_cahn_eq}) with initial conditions $ u_{ \varepsilon }^{ 0 } $ 
		converge in $ \lp^{ 1 } \left( ( 0 , T ) \times \flattorus ; \mathbb{ R }^{ 
			N } \right) $ to some $ u = \sum_{ i = 1 }^{ P } \chi_{ i } \alpha_{ i } $ 
		with a partition 
		\begin{equation*} \chi \in \bv \left( ( 0 , T ) \times \flattorus ; \{ 0 , 
		1 \}^{ P } \right) .
		\end{equation*}
		Furthermore we have
		\begin{equation}
		\label{energy_estimate_for_limit_u}
		\energy ( u ( t ) ) 
		\leq
		\liminf_{ \varepsilon \to 0 }
		\energy_{ \varepsilon } ( u_{ \varepsilon } ( t ) )
		\leq 
		\energy_{ 0 }
		\end{equation}
		for almost every time $ 0 \leq t \leq T $ and $ u $ attains the initial data $ u^{ 0 } $ continuously in $  \lp^{ 2 } \left( \flattorus ; \mathbb{ R }^{ d } \right) $.
		Moreover for all $ 1 \leq i \leq P $, the compositions $ \psi_{ \varepsilon , i } \coloneqq \phi_{ i } \circ 
		u_{ 
			\varepsilon } $ are uniformly bounded in $ \bv ( ( 0 , T ) \times 
		\flattorus ) $ and converge to $ \psi_{ i } \coloneqq \phi_{ i } \circ u $ in $ \lp^{ 1 } \left( 
		( 0 , T ) \times \flattorus \right) $. 
	\end{proposition}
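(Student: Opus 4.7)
The plan is to adapt Baldo's vectorial Modica--Mortola argument to the parabolic setting, extracting $\bv$-compactness from the uniform energy bounds via the geodesic distance functions $\psi_{\varepsilon,i}$. First I would invoke \Cref{existence_of_ac_solution} together with the well-preparedness of the initial data to deduce that $\energy_{\varepsilon}(u_{\varepsilon}(t)) \leq \energy_{\varepsilon}(u_{\varepsilon}^{0})$ is uniformly bounded in $t$ and $\varepsilon$, and that $\int_{0}^{T}\!\!\int \varepsilon |\partial_{t}u_{\varepsilon}|^{2} \dd{x} \dd{s} \lesssim 1$.

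Next I would establish uniform $\bv((0,T)\times\flattorus)$-bounds on $\psi_{\varepsilon,i} = \phi_{i} \circ u_{\varepsilon}$, which is justified as a composition via the Ambrosio--Dal Maso chain rule. By \eqref{bound_on_nabla_psi} and Young's inequality,
\[
|\nabla \psi_{\varepsilon,i}| \leq \sqrt{2W(u_{\varepsilon})}\,|\nabla u_{\varepsilon}| \leq \tfrac{1}{\varepsilon}W(u_{\varepsilon}) + \tfrac{\varepsilon}{2}|\nabla u_{\varepsilon}|^{2},
\]
so the spatial total variation is controlled by the Cahn--Hilliard energy. For the time derivative, the same chain rule combined with Cauchy--Schwarz and the energy dissipation yields
\[
\int_{0}^{T}\!\!\int |\partial_{t}\psi_{\varepsilon,i}|\dd{x}\dd{t} \leq \Bigl(\int_{0}^{T}\!\!\int \tfrac{2W(u_{\varepsilon})}{\varepsilon}\dd{x}\dd{t}\Bigr)^{1/2}\Bigl(\int_{0}^{T}\!\!\int \varepsilon|\partial_{t}u_{\varepsilon}|^{2}\dd{x}\dd{t}\Bigr)^{1/2} \lesssim 1.
\]
Hence $\psi_{\varepsilon,i}$ is bounded in $\bv((0,T)\times\flattorus)$ and, after passing to a subsequence, converges in $L^{1}$ and almost everywhere to some $\psi_{i}$.

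To identify the limit of $u_{\varepsilon}$ itself, I would observe that $\int_{0}^{T}\!\!\int W(u_{\varepsilon})\dd{x}\dd{t} \lesssim \varepsilon \to 0$, so along a further subsequence $u_{\varepsilon} \to u$ almost everywhere with $W(u) = 0$, forcing $u$ to take values in $\{\alpha_{1},\dots,\alpha_{P}\}$. Setting $\chi_{i} \coloneqq \mathds{1}_{\{u = \alpha_{i}\}}$ yields a partition; the lower bound $\phi_{i}(v) \geq c|v - \alpha_{i}|$ near the remaining wells transfers the $\bv$-regularity of $\psi_{i}$ to each $\chi_{i}$, so that $\chi \in \bv((0,T)\times\flattorus;\{0,1\}^{P})$. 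The $L^{1}$-convergence of $u_{\varepsilon}$ itself follows from a.e.~convergence plus the uniform $L^{p}$-bound furnished by \eqref{polynomial_growth} and equi-integrability. The lower semicontinuity \eqref{energy_estimate_for_limit_u} at almost every time is then Baldo's static theorem applied slicewise, and the continuous attainment of the initial datum in $L^{2}$ follows from the embedding \eqref{w12_embeds_into_c1half} combined with $\energy_{\varepsilon}(u_{\varepsilon}^{0}) \to \energy(u^{0})$.

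The step I expect to be the main obstacle is the slicewise lower semicontinuity with the correct surface tensions $\sigma_{ij}$ holding for a.e.\ time $t$. While Baldo's static result is classical, producing a version compatible with the Fubini-type disintegration of space--time $\bv$-functions requires verifying that the time slices of $\chi$ are $\bv$-partitions of $\flattorus$ for a.e.~$t$ and that the corresponding slices of $\psi_{\varepsilon,i}$ recover the right geodesic-distance densities in the limit. This is where the geodesic distance framework from \cite{baldo_minimal_interface_criterion} together with the chain rule of Ambrosio--Dal Maso does the real work, and where the bound \eqref{bound_on_nabla_psi} is used in its sharpest form.
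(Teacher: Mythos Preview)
The paper does not prove this proposition; it is quoted verbatim from \cite[Prop.~2.7]{convergence_of_allen_cahn_equation_to_multiphase_mean_curvature_flow}. Your outline follows exactly the Baldo/Fonseca--Tartar strategy used there: uniform energy and dissipation bounds, $\bv$-compactness for the geodesic distance functions $\psi_{\varepsilon,i}$ via \eqref{bound_on_nabla_psi}, identification of the limit through $W(u_\varepsilon)\to 0$ in $L^1$, and Baldo's $\Gamma$-liminf for the slicewise energy bound. This is the right route, and most of your steps are sound.

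There is, however, one genuine gap. Your argument for the continuous attainment of the initial data invokes the embedding \eqref{w12_embeds_into_c1half} for $u_\varepsilon$ itself. That embedding gives a $C^{1/2}([0,T];L^2)$-seminorm controlled by $\|\partial_t u_\varepsilon\|_{L^2((0,T)\times\flattorus)}$, but the energy dissipation only bounds $\varepsilon\|\partial_t u_\varepsilon\|_{L^2}^2$, so the H\"older constant of $u_\varepsilon$ blows up like $\varepsilon^{-1/2}$. The correct argument works at the level of $\psi_{\varepsilon,i}$: your own Cauchy--Schwarz estimate, localized to $(s,t)$, gives
\[
\|\psi_{\varepsilon,i}(t)-\psi_{\varepsilon,i}(s)\|_{L^1(\flattorus)}
\le \Bigl(\int_s^t\!\!\int \tfrac{2W(u_\varepsilon)}{\varepsilon}\Bigr)^{1/2}
\Bigl(\int_s^t\!\!\int \varepsilon|\partial_t u_\varepsilon|^2\Bigr)^{1/2}
\lesssim |t-s|^{1/2},
\]
uniformly in $\varepsilon$. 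Arzel\`a--Ascoli then yields $\psi_{\varepsilon,i}\to\psi_i$ in $C([0,T];L^1)$, and since each $\chi_k$ is a Lipschitz function of the vector $(\psi_1,\dots,\psi_P)$ restricted to its finite range $\{\sigma_{ik}\}$, this transfers to $u\in C([0,T];L^2)$ with $u(0)=u^0$.

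A smaller remark: the step ``along a further subsequence $u_\varepsilon\to u$ a.e.'' does not follow from $W(u_\varepsilon)\to 0$ alone, which only forces $\operatorname{dist}(u_\varepsilon,\{\alpha_1,\dots,\alpha_P\})\to 0$ a.e. You need to combine this with the a.e.~convergence of all $\psi_{\varepsilon,i}=\phi_i(u_\varepsilon)$ and the fact that the vector $(\phi_1,\dots,\phi_P)$ separates the wells to pin down which $\alpha_k$ is selected at each point. Your worry about the slicewise liminf is comparatively mild: the uniform-in-$t$ bound $\int|\nabla\psi_{\varepsilon,i}(t,\cdot)|\le \energy_\varepsilon(u_\varepsilon(t))\le \energy_0+o(1)$ already guarantees that for a.e.~$t$ the slices are $\bv$-partitions, and Baldo's static theorem applies directly once $u_\varepsilon(t,\cdot)\to u(t,\cdot)$ in $L^1$ for a.e.~$t$.
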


	\section{De Giorgi's mean curvature flow}
	\label{chapter_de_giorgis_mcf}
	
	In this chapter, we build on the results of \cite{convergence_of_allen_cahn_equation_to_multiphase_mean_curvature_flow}, but introduce 
	a different solution concept for mean curvature flow, namely a De Giorgi type $ 
	\bv 
	$-solution to mean curvature flow. A similar solution concept has been 
	introduced in 
	\cite{laux_otto_convergence_of_thresholding_scheme_for_mmcf} and
	\cite[Def.~1]{laux_lelmi_de_giorgis_inequality_for_the_threshholding_scheme}, 
	but in the context of convergence of the thresholding scheme to mean curvature 
	flow. Moreover, we will also compare it to the solution concept 
	\cite[Def.~1]{hensel_laux_varifold_solution_concept_for_mean_curvature_flow}, 
	which permits oriented varifolds to be solutions to mean curvature flow. 
	This provides a more general notion of solution and is of use when the 
	assumption of energy convergence falls away. See also the later discussion in \Cref{subsection_de_giorgi_type_varifold_solutions_for_mcf}.

	\subsection{Conditional convergence to De Giorgi's mean curvature flow}
	
	In this section, we shall state our solution concept and prove convergence to 
	the aforementioned.
	
	\begin{definition}[De Giorgi type $ \bv $-solution to multiphase mean 
		curvature 
		flow]
		\label{de_giorgi_solution_to_mmcf}
		Fix some finite time horizon $ T < \infty $, a $ (P \times P) $-matrix of 
		surface tensions $ \sigma $ and initial data $ \chi^{ 0 } \colon \flattorus 
		\to \{ 0 , 1 \}^{ P } $ with $ \energy_{ 0 } \coloneqq \energy ( \chi^{ 0 } 
		) < \infty $ and 
		$ \sum_{ i = 1 }^{ P } \chi_{ i }^{ 0 } = 1 $ almost everywhere. We say that
		\begin{equation*}
		\chi \in \cont \left(
		[ 0 , T ]
		;
		\lp^{ 2 } \left( \flattorus ; \{ 0 , 1 \}^{ P } \right)
		\right)
		\cap 
		\bv \left(
		( 0 , T ) \times \flattorus ; \{ 0 , 1 \}^{ P } 
		\right)
		\end{equation*}
		with $ \esssup_{ 0 \leq t \leq T } \energy ( \chi ) < \infty  $ and $ 
		\sum_{ i = 1 
		}^{ P } \chi_{ i } = \sum_{ i = 1 }^{ P } \mathds{ 1 }_{ \Omega_{ i } } = 
		1  $ almost everywhere is a \emph{De Giorgi type }$\bv$\emph{-solution to 
			multiphase mean 
			curvature flow with initial data} $ \chi^{ 0 } $ \emph{and surface 
			tensions} $ \sigma $ if the following holds. 
		\begin{enumerate}
			\item 
			For all $ 1 \leq i \leq P $, there exists a normal 
			velocity $ V_{ i } \in \lp^{ 2 } ( \abs{ \nabla \chi_{ i } } \dd{ t } 
			) $ 
			such that
			\begin{equation*}
			\partial_{ t } \chi_{ i }
			=
			V_{ i } \abs{ \nabla \chi_{ i } } \dd{ t }
			\end{equation*}
			holds in the distributional sense on $ ( 0 , T ) \times \flattorus $.
			
			\item 
			There exist a mean curvature vector $ H \in 
			\lp^{ 2 } ( \energy ( u ; \cdot) \dd{ t } ; \mathbb{ R }^{ d } ) $ 
			which satisfies
			\begin{align}
			\notag
			& 
			\sum_{ 1 \leq i < j \leq P }
			\sigma_{ i j }
			\int_{ 0 }^{ T }
			\int_{ \Sigma_{ i j } }
			\inner*{ H }{ \xi }
			\dd{ \hm^{ d - 1 } }
			\dd{ t }
			\\
			\label{mean_curvature_vector_bv_de_giorgi}
			={} &
			-
			\sum_{ 1 \leq i < j \leq P }
			\sigma_{ i j }
			\int_{ 0 }^{ T }
			\int_{ \Sigma_{ i j } }
			\inner*{
				\diff \xi }
			{ \mathrm{Id} - \nu_{ i } \otimes \nu_{ i } }
			\dd{ \hm^{ d - 1 } }
			\dd{ t }
			\end{align}
			for all test vector fields 
			$ \xi \in \cont_{ \mathrm{c} }^{ \infty } \left(
			( 0 , T ) \times \flattorus ; \mathbb{ R }^{ d }
			\right) $,
			where $ \nu_{ i } \coloneqq \nabla \chi_{ i } / \abs{ \nabla \chi_{ i } 
			} $ are the inner unit normals and $ \Sigma_{ i j } \coloneqq 
			\partial_{ \ast } \Omega_{ i } \cap \partial_{ \ast } \Omega_{ j } $
			is the $ (i, j )$-th interface.
			
			\item 
			The partition $ \chi $ satisfies a De Giorgi type optimal energy 
			dissipation inequality in the sense that for almost every time $ 0 < T' 
			< T $, we have
			\begin{equation}
			\label{optimal_energy_dissipation_solution}
			\energy ( \chi ( T' ) )
			+
			\frac{ 1 }{ 2 }
			\sum_{ 1 \leq i < j \leq P }
			\sigma_{ i j }
			\int_{ 0 }^{ T' }
			\int_{ \Sigma_{ i j } }
			V_{ i }^{ 2 }
			+
			\abs{ H }^{ 2 }
			\dd{ \hm^{ d - 1 } }
			\dd{ t }
			\leq
			\energy_{ 0 }.
			\end{equation}
			
			\item
			The initial data is attained in $ \cont \left( [ 0 , T ] ; 
			\lp^{ 2 } ( \flattorus ) \right) $.
		\end{enumerate}
	\end{definition}
	
	\subsection{Localization estimates}
	\label{section_localization_estimates}
	
	Indeed, we obtain the existence of such solutions through a phase-field approximation with the Allen-Cahn equation as long as we assume the convergence of the integrated energies given by
	\begin{equation}
	\label{energy_convergence}
		\lim_{ \varepsilon \to 0 }
		\int_{ 0 }^{ T }
			\energy_{ \varepsilon } ( u_{ \varepsilon } )
		\dd{ t }
		=
		\int_{ 0 }^{ T }
			\energy ( u )
		\dd{ t }.
	\end{equation}
	
	A consequence of this is the \emph{equipartition of energies}, already observed by Ilmanen in \cite{ilmanen_convergence_of_ac_to_brakkes_mcf} for the scalar case under weaker assumptions using a comparison principle. The proof for the multiphase case can be found in 
	\cite[Lemma~2.11]{convergence_of_allen_cahn_equation_to_multiphase_mean_curvature_flow}.
	
	\begin{lemma}
		\label{equipartition_of_energies}
		In the situation of \Cref{initial_convergence_result_multiphase} and under the energy 
		convergence assumption (\ref{energy_convergence}), we have 
		for any continuous function $ \varphi \in \cont ( \flattorus ) $ 
		that
		\begin{align*}
		\energy ( u ; \varphi )
		=
		\lim_{ \varepsilon \to 0 }
		\energy_{ \varepsilon } ( u_{ \varepsilon } ; \varphi )
		& = 
		\lim_{ \varepsilon \to 0 }
		\int
		\varphi
		\sqrt{ 2 W ( u_{ \varepsilon } ) }
		\abs{ \nabla u_{ \varepsilon } }
		\dd{ x }
		\\
		& =
		\lim_{ \varepsilon \to 0 }
		\int
		\varphi
		\varepsilon
		\abs{ \nabla u_{ \varepsilon } }^{ 2 }
		\dd{ x}
		\\
		& =
		\lim_{ \varepsilon \to 0 }
		\int
		\varphi
		\frac{ 1 }{ \varepsilon }
		2 W ( u_{ \varepsilon } )
		\dd{ x }
		\end{align*}
		for almost every time $ 0 \leq t \leq T $.
	\end{lemma}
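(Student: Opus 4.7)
The argument rests on the Modica--Mortola trick and BV lower semicontinuity applied to the auxiliary functions $\psi_{\varepsilon,i}$, upgraded from inequalities to equalities by exploiting the time-integrated energy convergence assumption~(\ref{energy_convergence}). I first reduce to nonnegative $\varphi$; the general case then follows by linearity in $\varphi = \varphi_+ - \varphi_-$.

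\textit{Step 1 (a chain of lower bounds).} Young's inequality, combined with the Ambrosio--Dal~Maso chain rule and the estimate (\ref{bound_on_nabla_psi}), yields the pointwise chain
\begin{equation*}
\frac{1}{\varepsilon}W(u_\varepsilon) + \frac{\varepsilon}{2}\abs{\nabla u_\varepsilon}^2 \geq \sqrt{2W(u_\varepsilon)}\,\abs{\nabla u_\varepsilon} \geq \abs{\nabla \psi_{\varepsilon,i}} \qquad \text{for every } 1 \leq i \leq P.
\end{equation*}
Multiplying by a nonnegative $\varphi \in \cont(\flattorus)$ and integrating, the $\lp^{1}$-convergence $\psi_{\varepsilon,i} \to \psi_i$ granted by \Cref{initial_convergence_result_multiphase} together with BV lower semicontinuity gives
\begin{equation*}
\liminf_{\varepsilon \to 0} \int \varphi \sqrt{2W(u_\varepsilon)}\,\abs{\nabla u_\varepsilon}\dd{x} \geq \int \varphi \dd\abs{\nabla \psi_i}.
\end{equation*}
Since $\psi_i$ takes the value $\sigma_{ij}$ on $\Omega_j$, the measure $\abs{\nabla \psi_i}$ is carried by $\bigcup_{j<k}\Sigma_{jk}$ with density $\abs{\sigma_{ij} - \sigma_{ik}}$, and on each $\Sigma_{jk}$ the choice $i \in \{j,k\}$ recovers the full surface tension $\sigma_{jk}$. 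Assembling these one-phase bounds through a measurable selection of the optimal index across interfaces, as carried out in \cite{convergence_of_allen_cahn_equation_to_multiphase_mean_curvature_flow} via De~Giorgi's structure theorem, produces the sharp geometric lower bound
\begin{equation*}
\liminf_{\varepsilon \to 0} \int \varphi \sqrt{2W(u_\varepsilon)}\,\abs{\nabla u_\varepsilon}\dd{x} \geq \energy(u;\varphi),
\end{equation*}
and a fortiori $\liminf_{\varepsilon \to 0} \energy_\varepsilon(u_\varepsilon;\varphi) \geq \energy(u;\varphi)$.

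\textit{Step 2 (propagation of equality).} Specialising to $\varphi \equiv 1$ and using the uniform bound $\energy_\varepsilon(u_\varepsilon(t)) \leq \energy_\varepsilon(u_\varepsilon^0)$ from (\ref{energy_dissipation_sharp}), a dominated convergence argument applied to the nonnegative quantity $\bigl(\energy_\varepsilon(u_\varepsilon(t)) - \energy(u(t))\bigr)_-$ combined with (\ref{energy_convergence}) forces $\energy_\varepsilon(u_\varepsilon(t)) \to \energy(u(t))$ for almost every $t$ along a subsequence. Since each inequality in Step~1 is pointwise nonnegative, the corresponding defect measures---the Young gap $\frac{1}{2\varepsilon}\bigl(\sqrt{2W(u_\varepsilon)} - \varepsilon\abs{\nabla u_\varepsilon}\bigr)^2$ and the chain-rule gap---must vanish in total mass against any nonnegative $\varphi$. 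This collapses the whole chain to equalities in the limit, yielding the four identities of the statement for nonnegative $\varphi$; linearity closes the general case.

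\textit{Main obstacle.} The delicate step is the multiphase assembly in Step~1: single-phase BV lower semicontinuity only delivers $\abs{\sigma_{ij} - \sigma_{ik}}$ on $\Sigma_{jk}$, whereas the target $\energy(u;\varphi)$ demands the full tension $\sigma_{jk}$. Producing a sharp lower bound therefore requires a measurable selection of the dominant phase index on each interface, made possible by the fine structure of the reduced boundary via De~Giorgi's theorem and the triangle inequality for $\sigma$. Once this localisation is in place, the energy convergence hypothesis automatically forces all four quantities to share a common limit.
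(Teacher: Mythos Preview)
The paper does not give its own proof of this lemma; it simply cites \cite[Lemma~2.11]{convergence_of_allen_cahn_equation_to_multiphase_mean_curvature_flow}. Your sketch reproduces essentially the same argument that underlies that reference: the Modica--Mortola/Baldo chain of pointwise inequalities yields the localized $\liminf$ bound, and the time-integrated energy convergence assumption~(\ref{energy_convergence}) together with the pointwise $\liminf$ inequality and the uniform bound from (\ref{energy_dissipation_sharp}) forces $\energy_\varepsilon(u_\varepsilon(t))\to\energy(u(t))$ in $\lp^1(0,T)$, hence pointwise almost everywhere along a subsequence; the vanishing of the Young gap and the chain-rule gap then collapses the four quantities. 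Two small remarks: first, the complement trick $\varphi\mapsto \norm{\varphi}_\infty-\varphi$ is the clean way to pass from the localized $\liminf$ bound to the matching $\limsup$ bound once the global energy is known to converge, and you may want to make that explicit rather than speaking of defect measures; second, the passage to a further subsequence is genuinely needed, since $\lp^1$ convergence in time plus the one-sided bound $\liminf\geq 0$ does not by itself yield almost-everywhere convergence of the full sequence.
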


In order to prove convergence, we want to 
reduce the multiphase case to the two-phase case. The central idea here is to 
cover the flat torus with a suitable collection of balls. Then we argue that, 
up to a small error, we can choose 
for each ball a pair of majority phases $ (i, j) $ such that the partition looks like a 
two-phase mean curvature flow on the ball, see 
\Cref{figure_localization_of_mmcf}.

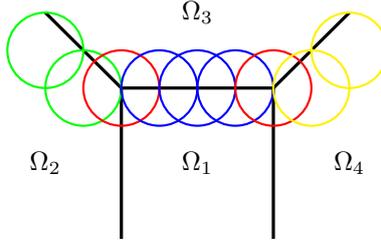
\begin{figure}[h]
	\centering
	\begin{tikzpicture}
	
	\draw[very thick]  (1,-1) -- (2, -2 );
	\draw[very thick]  (4,-2) --(5,-1);
	
	\draw[very thick](2,-2)--(4,-2);
	\draw[very thick] (4,-2)--(4,-4);
	\draw[very thick] (2, -4) -- (2, -2);

	\draw[green,thick] (1,-1.5) circle (0.5cm);	
	\draw[green,thick] (1.5,-2) circle (0.5cm);	
	\draw[red,thick] (2,-2) circle (0.5cm);	
	\draw[blue,thick] (2.5,-2) circle (0.5cm);	
	\draw[blue,thick] (3,-2) circle (0.5cm);	
	\draw[blue,thick] (3.5,-2) circle (0.5cm);	
	\draw[red,thick] (4,-2) circle (0.5cm);	
	\draw[yellow,thick] (4.5,-2) circle (0.5cm);	
	\draw[yellow,thick] (5,-1.5) circle (0.5cm);	
	
	\node at (3,-3) {$\Omega_{ 1 }$};
	\node at (1,-3) {$\Omega_{2}$};
	\node at (3,-1) {$\Omega_{3}$};
	\node at (5,-3) {$\Omega_{4}$};
	\end{tikzpicture}
	\caption{Localization of multiphase mean curvature flow. For the green 
		balls, we choose the majority phase (2,3), for the blue ones (1,3) and for 
		the yellow balls (3,4). The red balls however give us an error.}
	
	\label{figure_localization_of_mmcf}
\end{figure}

To formulate this rigorously let $ r > 0 $ and define the covering $ \mathcal{ 
	B }_{ r 
} $ of the flat torus by
\begin{equation*}
\mathcal{B}_{ r } \coloneqq
\left\{
B_{ r } ( c ) 
\, \colon \,
c \in \mathcal{ L }_{ r }
\right\},
\end{equation*}
where the set of centers $ \mathcal{ L }_{ r } $ is given by $ \mathcal{ L }_{ 
	r } \coloneqq \flattorus \cap (r/\sqrt{ d }) \mathbb{ Z }^{ d } $. Moreover 
let 
$ \rho_{ B } $ be a smooth cutoff for the ball $ B $ with support in the ball 
with the same center, but double the radius.

Additionally to choosing a majority phase, we can also argue that along the 
chosen majority phase, we have a local flatness. Thus we may approximate the 
inner unit normal up to an arbitrarily small error by a constant unit vector. 
This is captured by the following lemma found in 
\cite{laux_otto_convergence_of_thresholding_scheme_for_mmcf}. The proof is 
based on De Giorgi's structure theorem for sets of finite perimeter.

\begin{lemma}
	\label{localization_lemma_with_normals}
	For every $ \delta > 0 $ and every partition $ \chi \colon \flattorus \to 
	\{ 0 , 1 \}^{ P } $ such that $ \chi_{ i } \in \bv ( \flattorus ) $ holds 
	for all $ 1 \leq i \leq P $, there exist some $ r_{ 0 } > 0 $ such that for 
	all $ 0 < r < r_{ 0 } $, we find for every ball $ B \in \mathcal{B}_{ r } $ 
	some unit vector $ \nu_{ B } $ such that
	\begin{align*}
	& \sum_{ B \in \mathcal{B}_{ r } }
	\min_{ i \neq j }
	\int
	\rho_{ B }
	\abs{ \nu_{ i } - \nu_{ B } }^{ 2 }
	\abs{ \nabla \chi_{ i } }
	+
	\int
	\rho_{ B }
	\abs{ \nu_{ j } + \nu_{ B } }^{ 2 }
	\abs{ \nabla \chi_{ j } }
	+
	\sum_{ k \notin \{ i , j \} }
	\int 
	\rho_{ B }
	\abs{ \nabla \chi_{ k } }
	\\
	\lesssim{} &
	\delta \energy ( \chi ).
	\end{align*}
\end{lemma}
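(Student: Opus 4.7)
The plan is to decompose the covering $\mathcal{B}_r$ into \emph{good} and \emph{bad} balls based on how closely the partition restricted to the doubled ball $2B$ resembles a flat two-phase configuration, and then to bound the bad contribution via the $\mathcal{H}^{d-1}$-negligibility of the singular part of the partition. The essential tool is De Giorgi's structure theorem: for each pair $(i,j)$, the reduced interface $\partial_{\ast} \Omega_i \cap \partial_{\ast} \Omega_j$ is $(d-1)$-rectifiable, the set $T$ of points lying on three or more reduced boundaries is $\mathcal{H}^{d-1}$-null, and at $\mathcal{H}^{d-1}$-a.e.\ point $x \in \Sigma_{ij} \setminus T$ the sets $\Omega_i$ and $\Omega_j$ blow up to complementary half-spaces with common normal $\pm \nu_i(x)$.

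First I would quantify the "good set" as follows: for parameters $\eta, r > 0$ define $G_{\eta, r}$ as the set of points $x \in \bigcup_{i<j} \Sigma_{ij} \setminus T$ such that $x$ lies on exactly one interface $\Sigma_{ij}$ and the excess
\begin{equation*}
\frac{1}{r^{d-1}} \int_{B_{2r}(x) \cap \Sigma_{ij}} \abs{\nu_i - \nu_i(x)}^{2} \dd{\hm^{d-1}} + \frac{1}{r^{d-1}} \sum_{k \notin \{i,j\}} \abs{\nabla \chi_k}(B_{2r}(x))
\end{equation*}
is smaller than $\eta$. By the standard flatness property of sets of finite perimeter at regular points, together with the $\mathcal{H}^{d-1}$-nullity of $T$ and of the exceptional set for De Giorgi's theorem, the total perimeter measure of the complement of $G_{\eta,r}$ in the interface tends to zero as $r \to 0$ for every fixed $\eta$.

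Next, for a ball $B \in \mathcal{B}_r$ whose doubled ball $2B$ meets $G_{\eta, r}$, I would pick a witness $x_B \in 2B \cap G_{\eta, r}$, let $(i,j)$ be the unique majority pair associated with $x_B$, and set $\nu_B \coloneqq \nu_i(x_B)$. The three integrals in the statement then correspond exactly to the quantities appearing in the excess above (after absorbing the factor $\rho_B \lesssim \mathds{1}_{2B}$), and each is controlled by $\eta \, \mathcal{H}^{d-1}(2B \cap \bigcup_{k<l} \Sigma_{kl})$ up to a dimensional constant. For a ball $B$ whose doubled ball does not meet $G_{\eta, r}$, I would pick any pair $(i,j)$; the three integrals are trivially bounded by the total perimeter inside $2B$, which lies entirely in the small bad set.

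Summing over $B \in \mathcal{B}_r$ and using that the doubled covering $\{2B\}_{B \in \mathcal{B}_r}$ has bounded overlap depending only on $d$, the total contribution of the good balls is $\lesssim \eta \, \mathcal{E}(\chi)$, while the total contribution of the bad balls is $\lesssim \sum_{i<j} \abs{\nabla \chi_i}(\{x : B_{2r}(x) \not\subset G_{\eta,r}\})$, which tends to zero as $r \to 0$ by the dominated convergence theorem applied to the perimeter measure. Choosing $\eta$ small enough and then $r_0$ small enough yields the claim. The main obstacle, as expected from the Laux--Otto argument, is the simultaneous choice of $\eta$ and $r$: one must verify that the two smallness parameters can be coupled consistently across all balls, which relies on a Vitali-type selection together with the uniform $\mathcal{H}^{d-1}$-approximate continuity of the normal fields $\nu_i$, rather than a pointwise blow-up at each center.
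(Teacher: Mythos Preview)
The paper does not actually give a proof of this lemma; it simply cites Laux--Otto \cite{laux_otto_convergence_of_thresholding_scheme_for_mmcf} and remarks that ``the proof is based on De Giorgi's structure theorem for sets of finite perimeter.'' Your sketch follows precisely that strategy---good/bad decomposition of balls according to tilt-excess and minority-phase mass, with the bad set controlled by the $\hm^{d-1}$-nullity of triple points and of the non-Lebesgue set of the normals---so there is no divergence in approach to compare.

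Two small technical points are worth tightening. First, your witness $x_B$ lies in $2B$, so the excess at $x_B$ controls integrals over $B_{2r}(x_B)$, which need not contain $2B$; you should define $G_{\eta,r}$ using balls of radius $4r$ (or pick the witness in $B$ and enlarge accordingly) so that $\operatorname{supp}\rho_B\subset B_{4r}(x_B)$. Second, the claimed bound ``$\eta\,\hm^{d-1}(2B\cap\bigcup\Sigma_{kl})$'' for good balls does not follow directly from the excess being $<\eta$: the excess bound gives only $\eta\,r^{d-1}$. To sum this over good balls and land on $\eta\,\energy(\chi)$ you also need the lower density bound $\hm^{d-1}(B_{4r}(x_B)\cap\Sigma)\gtrsim r^{d-1}$, which is part of De Giorgi's theorem at reduced-boundary points and should be built into the definition of $G_{\eta,r}$. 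With these adjustments the coupling of $\eta$ and $r$ is straightforward---fix $\eta$ proportional to $\delta$ first, then send $r\to 0$---and no Vitali selection is needed beyond the bounded overlap of the lattice covering.
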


Note that summing  $ 
\abs{ \nabla \chi_{ k } } $ over all $ k \notin \{ i , j \} $ 
corresponds to summing over all interfaces which are not the $ (i,j)$-th interface, see also \cite{baldo_minimal_interface_criterion}.
Moreover the second summand is redundant. In fact the last 
summand provides us with a localization on the $ (i,j)$-th interface, on which 
we have $ \nu_{ i } = - \nu_{ j } $.
However it is convenient to keep it so that we do not have to repeat this 
argument.

\begin{remark}
	\label{localization_estimate_weaker}
	This localization estimate also implies the smallness of
	\begin{equation}
	\label{alternative_localization_equality}
	\sum_{ B \in \mathcal{ B }_{ r } }
	\min_{ i }
	\abs{
		\energy ( \chi; \rho_{ B } )
		-
		\int
		\rho_{ B }
		\abs{ \nabla \psi_{ i } }
	}
	=
	\sum_{ B \in \mathcal{ B }_{ r } }
	\energy ( \chi ; \rho_{ B } )
	-
	\max_{ i }
	\int
	\rho_{ B }
	\abs{ \nabla \psi_{ i } }
	\end{equation}
	in the same sense as in the above \Cref{localization_lemma_with_normals}. 
	Notice that the equality (\ref{alternative_localization_equality}) follows 
	from 
	\begin{equation}
	\label{rewriting_variation_of_psi_i}
	 \abs{ 
		\nabla 
		\psi_{ i } }
	=
	\sum_{ 1 \leq k < l \leq P}
		\abs{ \sigma_{ i k } - \sigma_{ i l } }
		\hm^{ d - 1 } \llcorner_{ \Sigma_{ k l } }  
	\leq \energy ( \chi ; \cdot ),
	\end{equation} 
	where we remind that $ \psi_{ i } \coloneqq \phi_{ i } \circ u $.
	The smallness follows 
	since we have for every $ i \neq j $ that
	\begin{align*}
	&
	\energy ( \chi ; \rho_{ B } ) 
	-
	\int
	\rho_{ B }
	\abs{ \nabla \psi_{ i } }
	\\
	={} &
	\sum_{ 1 \leq k < l \leq P }
	\sigma_{ k l }
	\int_{ \Sigma_{ k l } }
	\rho_{ B }
	\dd{ \hm^{ d -1 } }
	-
	\sum_{ 1 \leq k < l \leq P }
	\abs{ \sigma_{ i k } - \sigma_{ i l } }
	\int_{ \Sigma_{ k l } }
	\rho_{ B }
	\dd{ \hm^{ d - 1 } }
	\\
	={} &
	\sum_{ \substack{1 \leq k < l \leq P \\ ( k, l ) \neq ( i, j ) } }
	\left(
	\sigma_{ k l } - \abs{ \sigma_{ i k } - \sigma_{ i l } }
	\right)
	\int_{ \Sigma_{ k l } }
	\rho_{ B }
	\dd{ \hm^{ d - 1 } }
	\\
	\lesssim{} &
	\sum_{ k \notin \{ i, j \} }
	\int
	\rho_{ B }
	\abs{ \nabla \chi_{ k } }.
	\end{align*}
	Thus we can estimate the error by the last summand of the error in 
	\Cref{localization_lemma_with_normals}.
\end{remark}

	\subsection{Conditional convergence of Allen--Cahn equations to De Giorgi's multiphase mean curvature flow}
	
	We are now in the position to prove the desired conditional convergence result.
	\begin{theorem}
		\label{convergence_to_de_giorgis_multiphase_mcf}
		Let $ W \colon \mathbb{ R }^{ N } \to [ 0, 
		\infty ) $ be a smooth multiwell potential satisfying the assumptions 
		(\ref{polynomial_growth})-(\ref{perturbation bound}). Let $ T < \infty 
		$ be an arbitrary finite time horizon. Let 
		$ u_{ \varepsilon }^{ 0 } \colon \flattorus \to \mathbb{ R }^{ N } $ be a 
		sequence of initial data
		approximating a partition 
		$ \chi^{ 0 } \in \bv \left( \flattorus ; \{ 0 , 1 \}^{ P } \right) $ 
		in the sense that 
		$ u_{ \varepsilon }^{ 0 } \to u^{ 0 } =  \sum_{ 1 \leq i \leq P } 
		\chi_{ i }^{ 0 } \alpha_{ i } $ 
		holds pointwise almost everywhere and 
		\begin{equation*} 
		\energy_{ 0 } 
		\coloneqq 
		\energy ( \chi^{ 0 } ) 
		= 
		\lim_{ \varepsilon \to 0 } 
		\energy_{ \varepsilon } ( u_{ \varepsilon }^{ 0 } ) 
		< 
		\infty.
		\end{equation*}
		Then we have for 
		some subsequence of solutions $ u_{\varepsilon } $ to the Allen--Cahn 
		equation
		(\ref{allen_cahn_eq})  with initial datum $ u_{ 
			\varepsilon }^{ 0 } $ that there exists a time-dependent partition $ 
		\chi $ 
		with 
		\begin{equation*}
		\chi \in \bv \left( ( 0 , T ) \times \flattorus ; \{ 0 , 1 \}^{ P } 
		\right) 
		\end{equation*} and
		$ \chi 
		\in \cont \left( [ 0 , T ] ; \lp^{ 2 } \left( \flattorus ;  \{ 0 , 1 
		\}^{ P } \right) \right) $ such that $ u_{ \varepsilon } $ converges to 
		$ u \coloneqq \sum_{ 1 \leq i \leq P } \chi_{ i } \alpha_{ i } $ in $ \lp^{ 1 } \left( ( 0 , T ) \times \flattorus ; \mathbb{ R }^{ N } \right)$. 
		Moreover $ u $ attains the initial data $ u^{ 0 } $ continuously in $ \lp^{ 2 }\left( \flattorus ; \mathbb{ R }^{ N }  \right) $. 
		
		If we 
		additionally assume that the 
		time-integrated energies converge (\ref{energy_convergence}), then $ 
		\chi $ is a De Giorgi type $ \bv $-solution to multiphase mean curvature 
		flow in the sense 
		of 
		\Cref{de_giorgi_solution_to_mmcf}.
	\end{theorem}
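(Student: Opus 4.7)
The first part of the statement --- compactness, the $ \bv $-structure, $ \lp^{1} $-convergence, and continuous attainment of the initial data --- is essentially a direct application of \Cref{initial_convergence_result_multiphase}: the only additional input is that the uniform bound $ \partial_{t}u_{\varepsilon}\in\lp^{2}((0,T)\times\flattorus) $ coming from (\ref{energy_dissipation_sharp}) produces, via (\ref{w12_embeds_into_c1half}), a uniform $ \tfrac12 $-Hölder estimate in time with values in $ \lp^{2} $, which transfers to the limit and gives the continuous attainment of $ u^{0} $. For the remainder I assume the energy convergence (\ref{energy_convergence}) and construct the three missing ingredients of \Cref{de_giorgi_solution_to_mmcf}.

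\textbf{Normal velocity.} The Ambrosio--Dal Maso chain rule combined with (\ref{bound_on_nabla_psi}) gives
\begin{equation*}
\abs{\partial_{t}\psi_{\varepsilon,i}}
\leq
\abs{\nabla\phi_{i}(u_{\varepsilon})}\,\abs{\partial_{t}u_{\varepsilon}}
\leq
\sqrt{2W(u_{\varepsilon})}\,\abs{\partial_{t}u_{\varepsilon}}.
\end{equation*}
Testing against $ \zeta\in\cont_{\mathrm{c}}^{\infty} $ and Cauchy--Schwarz bound this by $ \bigl(\int\varepsilon\abs{\partial_{t}u_{\varepsilon}}^{2}\zeta^{2}\bigr)^{1/2}\bigl(\int\tfrac{1}{\varepsilon}2W(u_{\varepsilon})\bigr)^{1/2} $; the first factor is uniformly bounded via (\ref{energy_dissipation_sharp}) and the second converges by \Cref{equipartition_of_energies} to the energy measure. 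Thus $ \partial_{t}\psi_{i} $ is absolutely continuous with respect to $ \energy(u;\cdot)\dd{t} $ with $ \lp^{2} $-density. The decomposition (\ref{rewriting_variation_of_psi_i}) together with the localizations \Cref{localization_lemma_with_normals} and \Cref{localization_estimate_weaker} then identifies this density on each interface $ \Sigma_{ij} $ with a single normal velocity $ V_{i}\in\lp^{2}(\abs{\nabla\chi_{i}}\dd{t}) $ so that $ \partial_{t}\chi_{i}=V_{i}\abs{\nabla\chi_{i}}\dd{t} $ distributionally.

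\textbf{Mean curvature.} This is the duality heart of the proof. Testing (\ref{ac_weak_equation}) against vector fields of the form $ \diff u_{\varepsilon}\cdot\xi $ with $ \xi\in\cont_{\mathrm{c}}^{\infty}((0,T)\times\flattorus;\mathbb{R}^{d}) $ and integrating by parts yields, up to an equipartition defect that vanishes by \Cref{equipartition_of_energies}, the approximate first-variation identity
\begin{equation*}
\int\varepsilon\inner*{\diff\xi}{\mathrm{Id}-\nu_{\varepsilon}\otimes\nu_{\varepsilon}}\abs{\nabla u_{\varepsilon}}^{2}\dd{x} + o(1)
=
-\int\varepsilon\inner*{\partial_{t}u_{\varepsilon}}{\diff u_{\varepsilon}\cdot\xi}\dd{x},
\end{equation*}
where $ \nu_{\varepsilon} $ is the approximate frozen unit normal associated to the majority phase. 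The rank-one decomposition of $ \nabla u_{\varepsilon} $ recalled in the introduction together with \Cref{localization_lemma_with_normals} allows passage to the limit on the left-hand side, producing $ -\sum_{i<j}\sigma_{ij}\int_{0}^{T}\int_{\Sigma_{ij}}\inner*{\diff\xi}{\mathrm{Id}-\nu_{i}\otimes\nu_{i}}\dd{\hm^{d-1}}\dd{t} $. The right-hand side defines a linear functional on $ \xi $ which is bounded on $ \lp^{2}(\energy(u;\cdot)\dd{t};\mathbb{R}^{d}) $ by Cauchy--Schwarz and (\ref{energy_dissipation_sharp}); Riesz representation then supplies $ H\in\lp^{2}(\energy(u;\cdot)\dd{t};\mathbb{R}^{d}) $ satisfying (\ref{mean_curvature_vector_bv_de_giorgi}).

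\textbf{De Giorgi inequality and main obstacle.} Using $ \partial_{t}u_{\varepsilon}=\Delta u_{\varepsilon}-\tfrac{1}{\varepsilon^{2}}\nabla W(u_{\varepsilon}) $, split the dissipation integral symmetrically as
\begin{equation*}
\int_{0}^{T'}\!\int\varepsilon\abs{\partial_{t}u_{\varepsilon}}^{2}\dd{x}\dd{t}
=
\tfrac12\int_{0}^{T'}\!\int\varepsilon\abs{\partial_{t}u_{\varepsilon}}^{2}\dd{x}\dd{t}
+\tfrac12\int_{0}^{T'}\!\int\varepsilon\abs*{\Delta u_{\varepsilon}-\tfrac{1}{\varepsilon^{2}}\nabla W(u_{\varepsilon})}^{2}\dd{x}\dd{t}.
\end{equation*}
By the same duality argument used for the velocity, applied with test functions of the form $ \zeta\nu_{B} $ on the covering $ \mathcal{B}_{r} $, the first summand liminfs to $ \tfrac12\sum_{i<j}\sigma_{ij}\int_{0}^{T'}\int_{\Sigma_{ij}}V_{i}^{2}\dd{\hm^{d-1}}\dd{t} $; the second summand, via the mean-curvature identity of the previous step, liminfs to $ \tfrac12\sum_{i<j}\sigma_{ij}\int_{0}^{T'}\int_{\Sigma_{ij}}\abs{H}^{2}\dd{\hm^{d-1}}\dd{t} $. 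Combined with the lower semicontinuity (\ref{energy_estimate_for_limit_u}) on the left of (\ref{energy_dissipation_sharp}) and the energy convergence of the initial data on the right, this yields (\ref{optimal_energy_dissipation_solution}). The main obstacle throughout is this coupled duality-plus-localization estimate: the equipartition defect must be controlled uniformly across the balls of $ \mathcal{B}_{r} $, and the errors from the non-majority phases in each ball absorbed by $ \delta\energy(\chi) $ in \Cref{localization_lemma_with_normals}, which forces a careful ordering of the limits $ \varepsilon\to 0 $, then $ r\to 0 $ and $ \delta\to 0 $.
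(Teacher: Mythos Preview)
Your overall architecture matches the paper's: cite \Cref{initial_convergence_result_multiphase} for compactness, split the dissipation symmetrically into a velocity half and a curvature half, and pass to the limit in each by a duality argument combined with the localization \Cref{localization_lemma_with_normals}. The existence of $H$ via Riesz and the lower semicontinuity of the curvature term are done in the paper exactly as you outline (Young's inequality against $\diff u_{\varepsilon}\xi$, then $\xi_n\to -H$ in $\lp^2(\energy(u;\cdot)\dd t)$). The existence of the normal velocities is simply quoted from \cite[Prop.~2.10]{convergence_of_allen_cahn_equation_to_multiphase_mean_curvature_flow}, which is what your sketch reproduces.

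The one place where your sketch is off is the velocity lower semicontinuity. You write that one dualizes with ``test functions of the form $\zeta\nu_B$'', but $\nu_B\in\mathbb{R}^d$ lives in the domain, whereas $\partial_t u_\varepsilon\in\mathbb{R}^N$ lives in the target; there is no natural pairing. The paper's dual object is $\nabla\phi_i(u_\varepsilon)\,\varphi$ with a \emph{scalar} test function $\varphi$, so that Young's inequality gives
\[
\tfrac12\varepsilon|\partial_t u_\varepsilon|^2
\ \geq\ \varphi\,\partial_t\psi_{\varepsilon,i}
\ -\ \tfrac12\,\tfrac{1}{\varepsilon}\,|\nabla\phi_i(u_\varepsilon)|^2\,\varphi^2,
\]
and the second term is controlled by $\tfrac{1}{\varepsilon}2W(u_\varepsilon)\varphi^2$ via (\ref{bound_on_nabla_psi}), hence by equipartition. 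To make this work in the multiphase case the paper introduces three further parameters you do not mention: a truncation $|\varphi|\leq R$ (so that the localization errors are estimable), a \emph{time} partition $0=T_0<\cdots<T_K=T'$ with an accompanying partition of unity $g_k$ (to pull the choice of majority phase inside the time integral), and a Young parameter $\alpha$ to split the cross term $R|V_l|$. The limits are then taken in the order $\varepsilon\to 0$, time-partition width $\to 0$, $r\to 0$, $\alpha\to 0$, $R\to\infty$. Your final paragraph correctly flags that the ordering of limits is the delicate point, but the concrete mechanism---dualizing against $\nabla\phi_i(u_\varepsilon)$ rather than against a spatial normal, plus the $R$-truncation and the time localization---is what actually makes the argument close.
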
 
	
	\begin{proof}
		\Cref{initial_convergence_result_multiphase} already provides us with all claims except the last.
		The idea of the proof is that we already have an optimal energy dissipation 
		inequality for the Allen--Cahn equation given by 
		(\ref{energy_dissipation_sharp}).
		If we additionally use the Allen--Cahn equation once, we arrive at 
		the De Giorgi type optimal energy dissipation inequality given 
		by
		\begin{equation*}
		\label{energy_dissipation_sharp_with_curvature}
		\energy_{ \varepsilon } ( u_{ \varepsilon } ( T' ) )
		+
		\frac{ 1 }{ 2 }
		\int_{ 0 }^{ T' }
		\int
		\varepsilon \abs{ \partial_{ t } u_{ \varepsilon } }^{ 2 }
		+
		\frac{ 1 }{ \varepsilon }
		\abs{
			\varepsilon \Delta u_{ \varepsilon } 
			-
			\frac{ 1 }{ \varepsilon } \nabla W ( u_{ \varepsilon } )
		}^{ 2 }
		\dd{ x }
		\dd{ t }
		\leq
		\energy_{ \varepsilon } ( u_{ \varepsilon}^{ 0 } ).
		\end{equation*}
		Our hope is that as $ \varepsilon $ tends to zero, we can pass to the 
		optimal energy dissipation inequality 
		(\ref{optimal_energy_dissipation_solution}) for $ \chi $ through lower 
		semicontinuity. 
		Since we assume the convergence of the initial energies and energy 
		convergence for almost every time, the only terms we have to care about are
		the velocity and curvature term. 
		The desired lower semicontinuity of the velocity term reads
		\begin{equation}
		\label{lsc_of_velocity}
		\liminf_{ \varepsilon \to 0 }
		\frac{ 1 }{ 2 }
		\int_{ 0 }^{ T' }
		\int
		\varepsilon 
		\abs{ \partial_{ t } u_{ \varepsilon } }^{ 2 }
		\dd{ x }
		\dd{ t }
		\geq
		\frac{ 1 }{ 2 }
		\sum_{ i < j  }
		\sigma_{ i j }
		\int_{ 0 }^{ T' }
		\int_{ \Sigma_{ i j } }
		V_{ i }^{ 2 }
		\dd{ \hm^{ d - 1 } }
		\dd{ t }
		\end{equation}
		and the lower semicontinuity of the curvature term is given by
		\begin{align}
		\label{lsc_of_curvature}
		\notag
		& \liminf_{ \varepsilon \to 0 }
		\frac{ 1 }{ 2 }
		\int_{ 0 }^{ T' }
		\int
		\frac{ 1 }{ \varepsilon }
		\abs{
			\varepsilon
			\Delta u_{ \varepsilon }
			-
			\frac{ 1 }{ \varepsilon }
			\nabla W ( u_{ \varepsilon } ) 
		}^{ 2 }
		\dd{ x }
		\dd{ t }
		\\
		\geq{} &
		\frac{ 1 }{ 2 }
		\sum_{ i < j  }
		\sigma_{ i j }
		\int_{ 0 }^{ T' }
		\int_{ \Sigma_{ i j } }
		\abs{ H }^{ 2 }
		\dd{ \hm^{ d - 1 } }
		\dd{ t }.
		\end{align}
		The existence of square integrable distributional normal velocities has been proven in \cite[Prop.~2.10]{convergence_of_allen_cahn_equation_to_multiphase_mean_curvature_flow}.
		Moreover we have to show the existence of the mean curvature vector $ H $.
		We could cheat in this step and simply use that by the main result
		\cite[Thm.~1.2]{convergence_of_allen_cahn_equation_to_multiphase_mean_curvature_flow}, 
		we already know that the tangential 
		divergence applied to $ \xi $ is given by the velocity. In other words, 
		that we already have $ V_{ i } \nu_{ i } = -H $ on $ \Sigma_{ i j } $. But 
		we want 
		to present a more direct approach.
		
		Consider the linear functional 
		\begin{equation*}
		L ( \xi )
		\coloneqq
		- \sum_{ 1 \leq i < j \leq P }
		\sigma_{ i j }
		\int_{ 0 }^{ T }
		\int_{ \Sigma_{ i j } }
		\inner*{ \diff \xi }{ \mathrm{Id} - \nu_{ i } \otimes \nu_{ 
				i } }
		\dd{ \hm^{ d - 1 } }
		\dd{ t }
		\end{equation*}
		defined on test vector fields $ \xi $. Then $ L $ is bounded with respect 
		to the $ \lp^{ 2 } $-norm on $ ( 0 , T ) \times \flattorus $ equipped with the measure
		$ \energy ( u ; \cdot ) \dd{ t } $ since by the convergence of the 
		curvature 
		term observed in 
		\cite[Prop.~3.1]{convergence_of_allen_cahn_equation_to_multiphase_mean_curvature_flow}, we have
		\begin{align*}
		& \abs{ L ( \xi ) }
		\\
		={} &
		\lim_{ \varepsilon \to 0 }
		\abs{ 
			-
			\int_{ 0 }^{ T }
			\int
			\inner*{
				\varepsilon \Delta u_{ \varepsilon } 
				-
				\frac{ 1 }{ \varepsilon }
				\nabla W ( u_{ \varepsilon } )
			}
			{ \diff u_{ \varepsilon } \xi }
			\dd{ x }
			\dd{ t }
		}
		\\
		\leq{} &
		\liminf_{ \varepsilon \to 0 }
		\left(
		\int_{ 0 }^{ T }
		\int
		\frac{ 1 }{ \varepsilon }
		\abs{ 
			\varepsilon \Delta u_{ \varepsilon }
			-
			\frac{ 1 }{ \varepsilon }
			\nabla W ( u_{ \varepsilon } )
		}^{ 2 }
		\dd{ x }
		\dd{ t }
		\right)^{ 1/2 }
		\left(
		\int_{ 0 }^{ T }
		\int
		\varepsilon
		\abs{ \diff u_{ \varepsilon } \xi }^{ 2 }
		\dd{ x }
		\dd{ t }
		\right)^{ 1/2 }
		\\
		\leq{} &
		\liminf_{ \varepsilon \to 0 }
		\left(
		\int_{ 0 }^{ T }
		\int
		\varepsilon
		\abs{ 
			\partial_{ t } u_{ \varepsilon }
		}^{ 2 }
		\dd{ x }
		\dd{ t }
		\right)^{ 1/2 }
		\left(
		\int_{ 0 }^{ T }
		\int
		\varepsilon
		\abs{ \nabla u_{ \varepsilon } }^{ 2 }
		\abs{ \xi }^{ 2 }
		\dd{ x }
		\dd{ t }
		\right)^{ 1/2 }.
		\end{align*}
		The first factor stays uniformly bounded due to the energy dissipation 
		inequality (\ref{energy_dissipation_sharp}), and by the equipartition of 
		energies (\Cref{equipartition_of_energies}), the second factor 
		converges to the $ \lp^{ 2 } $-norm of $ \xi $ with respect to the energy 
		measure, proving our claim. Therefore we can extend the functional to 
		the square integrable functions with respect to the energy measure. By 
		Riesz representation theorem we obtain the existence of the desired mean 
		curvature vector $ H $.
		
		Let us now consider the lower semicontinuity of the curvature 
		term, which in particular yields a sharp version of the previous estimate. 
		Let again $ \xi $ be some test vector field. Then for all $ 
		\varepsilon > 0 $ and some fixed time, we have by Young's inequality 
		\begin{align*}
		& 
		\liminf_{ \varepsilon \to 0 }
		\frac{ 1 }{ 2 }
		\int
		\frac{ 1 }{ \varepsilon }
		\abs{ 
			\varepsilon
			\Delta u_{ \varepsilon }
			-
			\frac{ 1 }{ \varepsilon }
			\nabla W ( u_{ \varepsilon } )
		}^{ 2 }
		\dd{ x }
		\\
		\geq{} &
		\liminf_{ \varepsilon \to 0 }
		\int
		\inner*{ 
			\varepsilon
			\Delta u_{ \varepsilon }
			-
			\frac{ 1 }{ \varepsilon }
			\nabla W ( u_{ \varepsilon } )
		}{
			\diff u_{ \varepsilon } \xi
		}
		\dd{ x }
		-
		\frac{ 1 }{ 2 }
		\int 
		\varepsilon
		\abs{ \diff u_{ \varepsilon } \xi }^{ 2 }
		\dd{ x }
		\\
		\geq{} &
		\liminf_{ \varepsilon \to 0 }
		\int
		\inner*{ 
			\varepsilon
			\Delta u_{ \varepsilon }
			-
			\frac{ 1 }{ \varepsilon }
			\nabla W ( u_{ \varepsilon } )
		}{
			\diff u_{ \varepsilon } \xi
		}
		\dd{ x }
		-
		\limsup_{ \varepsilon \to 0 }
		\frac{ 1 }{ 2 }
		\int
			\varepsilon 
			\abs{ \diff u_{ \varepsilon } }^{ 2 }
			\abs{ \xi }^{ 2 }
		\\
		\geq{} &
		-\energy \left( \chi ; \inner*{ H }{ \xi } \right)
		-
		\frac{ 1 }{ 2 }
		\energy \left( \chi ; \abs{ \xi }^{ 2 } \right).
		\end{align*}
		Here the last inequality is due to the above mentioned convergence of the curvature term and the equipartition of energies \Cref{equipartition_of_energies}.
		Since this inequality holds for any test vector field, we may take a 
		sequence of test vector fields $ \xi_{ n } $ satisfying
		\begin{equation*}
		\lim_{ n \to \infty }
		\norm{ \xi_{ n } + H }_{ \lp^{ 2 } \left( \flattorus , \energy 
			( 
			\chi ; \cdot ) ; \mathbb{ R }^{ d } \right) } 
		=
		0.
		\end{equation*}
		This then yields the desired inequality (\ref{lsc_of_curvature}) by 
		applying Fatou's lemma.
		
		In principle the proof is now already done, since the main result by Laux and Simon already gives us that for almost every 
		time $ t $, we have $ V_{ i } \nu_{ i } = - H $ on $ \Sigma_{ i , j 
		} $ $ \hm^{ d-  1 } $-almost everywhere. We instead want to present another proof which 
		directly proves the lower semicontinuity of the velocity term and may leave 
		more room for future generalizations.
		
		Let us first consider the two-phase case $ N = 1 $ and $ P = 2 $. By a 
		similar duality argument as for the 
		lower semicontinuity of the curvature term, we compute 
		that for 
		every test function $ \varphi $, we have
		\begin{align*}
		& \liminf_{ \varepsilon \to 0 }
		\frac{ 1 }{ 2 }
		\int_{ 0 }^{ T }
		\int
		\varepsilon \abs{ \partial_{ t } u_{ \varepsilon } }^{ 2 }
		\dd{ x }
		\dd{ t }
		\\
		\geq{} &
		\liminf_{ \varepsilon \to 0 }
		\int_{ 0 }^{ T }
		\int
		\partial_{ t } u_{ \varepsilon } 
		\phi ' ( u_{ \varepsilon } )
		\varphi
		\dd{ x }
		\dd{ t }
		-
		\frac{ 1 }{ 2 }
		\int_{ 0 }^{ T }
		\int
		\frac{ 1 }{ \varepsilon }
		\left( \phi ' ( u_{ \varepsilon } ) \varphi \right)^{ 2 }
		\dd{ x }
		\dd{ t }
		\\
		={} &
		\liminf_{ \varepsilon \to 0 }
		\int_{ 0 }^{ T }
		\int
		\partial_{ t } \psi_{ \varepsilon }
		\varphi
		\dd{ x }
		\dd{ t }
		-
		\frac{ 1 }{ 2 }
		\int_{ 0 }^{ T }
		\int
		\frac{ 1 }{ \varepsilon }
		2 W ( u_{ \varepsilon } )
		\varphi^{ 2 }
		\dd{ x }
		\dd{ t }
		\\
		={} &
		\sigma
		\int_{ 0 }^{ T }
		\int_{ \Sigma }
		\varphi V
		\dd{ \hm^{ d - 1 } }
		\dd{ t }
		-
		\frac{ 1 }{ 2 }
		\sigma
		\int_{ 0 }^{ T }
		\int_{ \Sigma }
		\varphi^{ 2 }
		\dd{ \hm^{ d - 1 } }
		\dd{ t }.
		\end{align*}
		Here the last equality uses the weak convergence of $ \partial_{ t } (\phi \circ u_{ \varepsilon }) = \partial_{  t } 
		\psi_{ \varepsilon } $ to $ \partial_{ t } (\phi \circ u) = \partial_{  t } \psi = \sigma V \abs{ \nabla 
			\chi } \dd{ t } $ given by \Cref{initial_convergence_result_multiphase} for the first summand 
		and the equipartition of energies (\Cref{equipartition_of_energies}) for 
		the second summand.
		Since the inequality holds for any test function $ \varphi $, we may 
		plug in a sequence of test functions $ \varphi_{ n } $ satisfying
		\begin{equation*}
		\lim_{ n \to \infty }
		\norm{ \varphi_{ n } - V }_{ \lp^{ 2 } \left( ( 0 , T ) \times 
			\flattorus , \hm^{ d - 1 }  \llcorner_{ \Sigma } \dd{ t } \right) }
		= 0
		\end{equation*}
		and thereby obtain the desired inequality (\ref{lsc_of_velocity}).
		
		For the multiphase case, we do not find an immediate generalization of this 
		proof, but rather have to work with a localization argument in 
		order to 
		obtain a reduction to the two-phase case.
		
		For a localization in time, let $ \delta > 0 $ 
		and $ 0 = T_{ 0 } < T_{ 1 } < \dotsc < T_{ K } = T' $ be a partition of $ [ 
		0 , T' ] $. Let $ ( g_{ k } )_{ k = 1 , \dotsc , K } $ be a partition of 
		unity with respect to the intervals $ \left( ( T_{ k - 1 } - \delta , T_{ k 
		} + \delta )\right)_{ 1 \leq k \leq K} $, let $ r > 0 $
		and $ \eta_{ B } $ a partition of unity in space as in 
		\Cref{localization_lemma_with_normals}. Moreover we fix some $ R > 0 $. 
		Then we estimate by Young's inequality
		\begin{align*}
		& \liminf_{ \varepsilon \to 0 }
		\frac{ 1 }{ 2 }
		\int_{ 0 }^{ T }
		\int
		\varepsilon
		\abs{ \partial_{ t } u_{ \varepsilon } }^{ 2 }
		\dd{ x }
		\dd{ t }
		\\
		={} &
		\liminf_{ \varepsilon \to 0 }
		\sum_{ k = 1 }^{ K }
		\sum_{ B \in \mathcal{ B }_{ r } } 
		\frac{ 1 }{ 2 }
		\int_{ 0 }^{ T }
		\int
		g_{ k } \eta_{ B }
		\varepsilon
		\abs{ \partial_{ t } u_{ \varepsilon } }^{ 2 }
		\dd{ x }
		\dd{ t }
		\\
		\geq{} &
		\liminf_{ \varepsilon \to 0 }
		\sum_{ k = 1 }^{ K }
		\sum_{ B \in \mathcal{ B }_{ r } }
		\max_{ 1 \leq i \leq P }
		\sup_{ 
			\substack{ 
				\varphi \in \cont_{ \mathrm{c} }^{ \infty } 
				\left( ( 0 , T ) \times \flattorus \right)
				\\
				\abs{ \varphi } \leq R  
			}
		}
		\int_{ 0 }^{ T }
		\int
		g_{ k } \eta_{ B }
		\inner*{ \nabla \phi_{ i } ( u_{ 
				\varepsilon 
			} ) }{ \partial_{ t } u_{ \varepsilon } }
		\varphi
		\dd{ x }
		\dd{ t } 
		\\
		& \qquad \qquad \qquad \qquad \qquad \qquad \qquad 
		\quad 
		-
		\frac{ 1 }{ 2 }
		\int_{ 0 }^{ T }
		\int
		g_{ k } \eta_{ B }
		\frac{ 1 }{ \varepsilon }
		\abs{ \nabla \phi_{ i } ( u_{ \varepsilon } 
			) }^{ 2 }
		\varphi^{ 2 }
		\dd{ x }
		\dd{ t }.
		\end{align*}
		We identify via the chain rule that $ \inner*{ \nabla \phi_{ i } ( 
			u_{ \varepsilon } ) }{ \partial_{ t } u_{ \varepsilon } } = \partial_{ t } 
		\psi_{ \varepsilon , i } $ and moreover remember $ \abs{ \nabla \phi_{ i } 
		} \leq \sqrt{ 2 W } $ from inequality (\ref{bound_on_nabla_psi}).
		We pull the limit inferior inside the double sum and the suprema and thus 
		obtain 
		that this term can be estimated from below by
		\begin{align*}
		&
		\sum_{ k = 1 }^{ K }
		\sum_{ B \in \mathcal{ B }_{ r } }
		\max_{ 1 \leq i \leq P }
		\sup_{ 
			\substack{ 
				\varphi \in \cont_{ \mathrm{c} }^{ \infty } 
				\left( ( 0 , T ) \times \flattorus \right)
				\\
				\abs{ \varphi } \leq R  
			}
		}
		\liminf_{ \varepsilon \to 0 }
		\int_{ 0 }^{ T }
		\int
		g_{ k } \eta_{ B }
		\varphi
		\partial_{ t } \psi_{ \varepsilon, i }
		\\
		&\qquad \qquad \qquad \qquad \qquad \quad \;\;
		-
		\limsup_{ \varepsilon \to 0 }
		\frac{ 1 }{ 2 }
		\int_{ 0 }^{ T } 
		\int
		g_{ k }
		\eta_{ B }
		\frac{ 2 }{ \varepsilon }
		W ( u_{ \varepsilon } )
		\varphi^{ 2 }
		\dd{ x }
		\dd{ t }
		\\
		={}&
		\sum_{ k = 1 }^{ K }
		\sum_{ B \in \mathcal{ B }_{ r } }
		\max_{ 1 \leq i \leq P }
		\sup_{ 
			\substack{ 
				\varphi \in \cont_{ \mathrm{c} }^{ \infty } 
				\left( ( 0 , T ) \times \flattorus \right)
				\\
				\abs{ \varphi } \leq R  
			}
		}
		\int_{ 0 }^{ T }
		\int
		g_{ k } \eta_{ B }
		\varphi
		\partial_{ t } \psi_{ i }
		-
		\frac{ 1 }{ 2 }
		\int_{ 0 }^{ T }
		\energy \left( \chi ; g_{ k } \eta_{ B } \varphi^{ 2 } \right)
		\dd{ t }.
		\end{align*}
		For the equality we used the equipartition of energies \Cref{equipartition_of_energies} and the $ \lp^{ 1 } $-convergence of $ \psi_{ \varepsilon , i } $ to $ \psi_{ i } $ given by \Cref{initial_convergence_result_multiphase}. 
		By adding zero, we get
		\begin{align*}
		& \int_{ 0 }^{ T }
		\int
		g_{ k } \eta_{ B }
		\varphi
		\partial_{ t } \psi_{ i }
		\dd{ x }
		\dd{ t } 
		-
		\frac{ 1 }{ 2 }
		\int_{ 0 }^{ T }
		\energy \left( \chi ; g_{ k } \eta_{ B } \varphi^{ 2 } \right)
		\dd{ t }
		\\
		={}&
		\sum_{ j = 1 }^{ P }
		\left(
		\sigma_{ i j }
		\int_{ 0 }^{ T }
		\int
		g_{ k } \eta_{ B }
		\varphi
		V_{ j }
		\abs{ \nabla \chi_{ j } }
		\dd{ t } 
		-
		\frac{ 1 }{ 2 }
		\sigma_{ i j }
		\int_{ 0 }^{ T }
		\int
		g_{ k } \eta_{ B }
		\varphi^{ 2 }
		\abs{ \nabla \chi_{ j } }
		\dd{ t }
		\right)\\
		& 
		-\frac{ 1 }{ 2 }
		\int_{ 0 }^{ T }
		\left(
		\energy \left( \chi ; g_{ k } \eta_{ B } \varphi^{ 2 } \right)
		-
		\int
		g_{ k } \eta_{ B }
		\varphi^{ 2 }
		\abs{ \nabla \psi_{ i } }
		\right)
		\dd{ t }.
		\end{align*}
		Since no derivative has fallen on $ g_{ k } $, we may send $ \delta $ to 
		zero and 
		obtain by the dominated convergence theorem that we can replace $ g_{ k } $ 
		by $ \mathds{ 1 }_{ ( T_{ k - 1 } , T_{ k } ) } $. We thus end up with a 
		good summand consisting of
		\begin{equation}
		\label{the_good_summand}
		\sum_{ k = 1 }^{ K }
		\sum_{ B \in \mathcal{ B }_{ r } }
		\max_{ 1 \leq i \leq P }
		\sup_{ 
			\substack{ 
				\varphi \in \cont_{ \mathrm{c} }^{ \infty } 
				\left( ( 0 , T ) \times \flattorus \right)
				\\
				\abs{ \varphi } \leq R  
			}
		}
		\sum_{ j = 1 }^{ P }
		\sigma_{ i j }
		\int_{ T_{ k - 1 } }^{ T_{ k } } 
		\int\eta_{ B }
		\varphi 
		\left( V_{ j } - \frac{ 1 }{ 2 } \varphi \right)
		\abs{ \nabla \chi_{ j } }
		\dd{ t } 
		\end{equation}
		and an error summand given by
		\begin{align}
		\notag
		& \sum_{ k = 1 }^{ K }
		\sum_{ B \in \mathcal{ B }_{ r } }
		\max_{ 1 \leq i \leq P }
		\sup_{ \substack{ 
				\varphi \in \cont_{ \mathrm{c} }^{ \infty } 
				\left( ( 0 , T ) \times \flattorus \right)
				\\
				\abs{ \varphi } \leq R  
		} }
		- \frac{ 1 }{ 2 }
		\int_{ T_{ k - 1 } }^{ T_{ k } }
		\left(
		\energy \left( \chi ; \eta_{ B } \varphi^{ 2 } \right)
		-
		\int
		\eta_{ B }
		\varphi^{ 2 }
		\abs{ \nabla \psi_{ i } }
		\right)
		\dd{ t }
		\\
		\label{minor_error_term}
		\geq{} &
		\sum_{ k = 1 }^{ K }
		\sum_{ B \in \mathcal{ B }_{ r } }
		\max_{ 1 \leq i \leq P }
		- \frac{ R^{ 2 } }{ 2 }
		\int_{ T_{ k - 1 } }^{ T_{ k } }
		\left(
		\energy \left( \chi ; \eta_{ B } \right)
		-
		\int
		\eta_{ B }
		\abs{ \nabla \psi_{ i } }
		\right)
		\dd{ t },
		\end{align}
		where we used $ \abs{\nabla \psi_{ i }} \leq \energy ( \chi , \cdot ) $.
		We choose a majority phase $ ( i , j ) $ and estimate the 
		summands of (\ref{the_good_summand}) from below using
		\begin{align}
		\notag
		& \max_{ 1 \leq i \leq P }
		\sup_{ \substack{ 
				\varphi \in \cont_{ \mathrm{c} }^{ \infty } 
				\left( ( 0 , T ) \times \flattorus \right)
				\\
				\abs{ \varphi } \leq R  
		} }
		\sum_{ j = 1 }^{ P }
		\sigma_{ i j }
		\int_{ T_{ k  - 1 } }^{ T_{ k } }
		\int
		\eta_{ B } \varphi
		\left( V_{ j } - \frac{ 1 }{ 2 } \varphi  \right)
		\abs{ \nabla \chi_{ j } }
		\dd{ t }
		\\
		\notag
		\geq{} &
		\max_{ i < j }
		\sup_{ \substack{ 
				\varphi \in \cont_{ \mathrm{c} }^{ \infty } 
				\left( ( 0 , T ) \times \flattorus \right)
				\\
				\abs{ \varphi } \leq R  
		} }
		\sigma_{ i j }
		\int_{ T_{ k  - 1 } }^{ T_{ k } }
		\int
		\eta_{ B } \varphi
		\left( V_{ j } - \frac{ 1 }{ 2 } \varphi  \right)
		\abs{ \nabla \chi_{ j } }
		\dd{ t }
		\\
		\label{major_error}
		& \qquad \qquad \qquad \qquad -
		C \sum_{ l \notin \{ i , j \} }
		\int_{ T_{ k - 1 } }^{ T_{ k } }
		\int
		\eta_{ B } 
		\left(
		R \abs{ V_{ l } }
		+
		R^{ 2 }
		\right)
		\abs{ \nabla \chi_{ l } }
		\dd{ t }.
		\end{align}
		Concerning the error term, we have by \Cref{localization_estimate_weaker} 
		that
		\begin{equation*}
		\frac{ R^{ 2 } }{ 2 }
		\int_{ T_{ k - 1 } }^{ T_{ k } }
		\energy \left( \chi ; \eta_{ B } \right)
		-
		\int
		\eta_{ B }
		\abs{ \nabla \psi_{ i } }
		\dd{ t }
		\lesssim
		R^{ 2 }
		\sum_{ l \notin \{ i , j \} }
		\int_{ T_{ k - 1 } }^{ T_{ k } }
		\int
		\eta_{ B }
		\abs{ \nabla \chi_{ l } }
		\dd{ t },
		\end{equation*}
		which enables us to absorb the error (\ref{minor_error_term}) into the 
		error term  (\ref{major_error}).
		By applying Young's inequality, we get that for every parameter $ 
		\alpha \in ( 0 , 1 ) $, we have
		\begin{equation*}
		\int_{ T_{ k - 1 } }^{ T_{ k } }
		\int
		\eta_{ B } R \abs{ V_{ l } }
		\abs{ \nabla \chi_{ l } }
		\dd{ t }
		\lesssim
		\alpha
		\int_{ T_{ k - 1 } }^{ T_{ k } }
		\int
		\eta_{ B }
		V_{ l }^{ 2 }
		\abs{ \nabla \chi_{ l } }
		\dd{ t }
		+
		\frac{ R^{ 2 } }{ \alpha }
		\int_{ T_{ k - 1 } }^{ T_{k } }
		\int
		\eta_{ B }
		\abs{ \nabla \chi_{ l } }
		\dd{ t }.
		\end{equation*}
		Collecting our estimates, we end up with an error term which can be 
		estimated from below up to a constant by
		\begin{align*}
		- \alpha \sum_{ l = 1 }^{ P }
		\int_{ 0 }^{ T }
		\int
		V_{ l }^{ 2 }
		\abs{ \nabla \chi_{ l } }
		\dd{ t }
		-
		\frac{ R^{ 2 } }{ \alpha }
		\sum_{ k = 1 }^{ K }
		\sum_{ B \in \mathcal{ B }_{ r } }
		\max_{ i < j }
		\sum_{ l \notin \{ i , j \} }
		\int_{ T_{ k - 1 } }^{ T_{ k } }
		\int
		\eta_{ B }
		\abs{ \nabla \chi_{ l } }
		\dd{ t }.
		\end{align*}
		Moreover, we can choose for fixed $k , B $ and tuple $ (i, j ) $ a sequence 
		of test functions $ \varphi_{ n } $ with $ \abs{ \varphi_{ n } } \leq R $ 
		which converge to $ V_{ j } \mathds{ 1 }_{ \abs{ V_{ j } \leq R } } $
		in the sense that
		\begin{equation*}
		\lim_{ n \to \infty }
		\norm{ \varphi_{ n } - V_{ j } \mathds{ 1 }_{ \abs{ V_{ j } } \leq R } 
		}_{ \lp^{ 2 } \left(
			( T_{ k - 1 }, T_{ k } ) \times \flattorus ,
			\abs{ \nabla \chi_{ j } } \dd{ t }
			\right)
		}
		=
		0.
		\end{equation*}
		Combining these three arguments, we arrive at the estimate 
		\begin{align*}
		& \liminf_{ \varepsilon \to 0 } \frac{ 1 }{ 2 }
		\int_{ 0 }^{ T }
		\int
		\varepsilon \abs{ \partial_{ t } u_{ \varepsilon } }^{ 2 }
		\dd{ x }
		\dd{ t }
		\\
		\geq{} &
		- C \alpha 
		\sum_{ 1 \leq l \leq P }
		\int_{ 0 }^{ T }
		\int
		V_{ l }^{ 2 }
		\abs{ \nabla \chi_{ l } }
		\dd{ t }
		\\
		& +
		\sum_{ k = 1 }^{ K }
		\sum_{ B \in \mathcal{ B }_{ r } }
		\max_{ i < j }
		\int_{ T_{ k - 1 } }^{ T_{ k } } 
		\frac{ \sigma_{ i j } }{ 2 }
		\int
		\eta_{ B }
		\abs{ V_{ j } \mathds{ 1 }_{ \abs{ V_{ j } } \leq R 
		} }^{ 2 }
		\abs{ \nabla \chi_{ j } }
		- 
		C \frac{ R^{ 2 } }{ \alpha }
		\sum_{ l \notin \{ i , j \} }
		\int
		\eta_{ B }
		\abs{ \nabla \chi_{ l } }
		\dd{ t }
		\end{align*}
		for every parameter $ \alpha \in ( 0 , 1 ) $.
		We choose
		partitions whose width tends to zero and which are contained in each other. By the monotone convergence theorem and using Lebesgue points, we can pull the maximum inside the 
		time integral to obtain 
		\begin{align*}
		& \liminf_{ \varepsilon \to 0 } \frac{ 1 }{ 2 }
		\int_{ 0 }^{ T }
		\int
		\varepsilon \abs{ \partial_{ t } u_{ \varepsilon } }^{ 2 }
		\dd{ x }
		\dd{ t }
		\\
		\geq{}  & 
		- \alpha C
		\sum_{ 1 \leq l \leq P }
		\int_{ 0 }^{ T }
		\int
		V_{ l }^{ 2 }
		\abs{ \nabla \chi_{ l } }
		\dd{ t }
		\\
		& +
		\int_{ 0 }^{ T }
		\sum_{ B \in \mathcal{ B }_{ r } }
		\max_{ i < j }
		\frac{ \sigma_{ i j } }{ 2 }
		\int
		\eta_{ B }
		\abs{ V_{ j } \mathds{ 1 }_{ \abs{ V_{ j } } \leq R 
		} }^{ 2 }
		\abs{ \nabla \chi_{ j } }
		- 
		C \frac{ R^{ 2 } }{ \alpha }
		\sum_{ l \notin \{ i , j \} }
		\int
		\eta_{ B }
		\abs{ \nabla \chi_{ l } }
		\dd{ t }
		\\
		\geq{} &
		\frac{ 1 }{ 2 }
		\sum_{ 1 \leq i < j \leq P }
		\sigma_{ i j }
		\int_{ 0 }^{ T }
		\int_{ \Sigma_{ i j } }
		V_{ i }^{ 2 } \mathds{ 1 }_{ \abs{ V_{ i } } \leq R }
		\dd{ \hm^{ d - 1 } }
		\dd{ t }
		\\
		& -
		C \left(
		\alpha \sum_{ 1 \leq l \leq P }
		\int_{ 0 }^{ T }
		\int
		V_{ l }^{ 2 }
		\abs{ \nabla \chi_{ l } }
		\dd{ t }
		+
		\frac{ R^{ 2 } }{ \alpha }
		\int_{ 0 }^{ T }
		\sum_{ B \in \mathcal{ B }_{ r } }
		\min_{ i \neq j }
		\sum_{ k \notin \{ i , j \} }
		\int
		\eta_{ B }
		\abs{ \nabla \chi_{ k } }
		\dd{ t }
		\right).
		\end{align*}
		We first send $ r \to 0 $ and then 
		$ \alpha \to 0$ . By the localization result \Cref{localization_lemma_with_normals} we thus obtain that for all $R > 0 $
		\begin{equation*}
		\liminf_{ \varepsilon \to 0 } \frac{ 1 }{ 2 }
		\int_{ 0 }^{ T }
		\int
		\varepsilon \abs{ \partial_{ t } u_{ \varepsilon } }^{ 2 }
		\dd{ x }
		\dd{ t }
		\geq
		\frac{ 1 }{ 2 }
		\sum_{ 1 \leq i < j \leq P }
		\sigma_{ i j }
		\int_{ 0 }^{ T }
		\int_{ \Sigma_{ i j } }
		V_{ i }^{ 2 }
		\mathds{ 1 }_{ \abs{ V_{ i } } \leq R }
		\dd{ \hm^{ d - 1 } }
		\dd{ t }.
		\end{equation*}
		Therefore the lower semicontinuity of the velocity term now follows from 
		the 
		monotone convergence theorem.
	\end{proof}
	
	\subsection{De Giorgi type varifold solutions for mean curvature flow}
	\label{subsection_de_giorgi_type_varifold_solutions_for_mcf}
	
	Up until this point, we have always made the crucial assumption of energy 
	convergence (\ref{energy_convergence}) for our proofs.
	However this is usually a very strong assumption. One thing which could go 
	wrong is for example illustrated in 
	\Cref{figure_interfaces_collapse}. There we see that two approximate interfaces 
	of 
	the first and second phase collapse as $ \varepsilon $ tends to zero. This 
	results in a loss of energy since the measure theoretic boundary of a set with
	finite perimeter does not see such lines.
	
	\begin{figure}[h]
		\centering
		
		\begin{subfigure}[b]{0.3\linewidth}
			\begin{tikzpicture}
			\filldraw[fill=yellow, draw=yellow] (0,0) rectangle (0.75,3);
			\shade[left color=yellow,right color=red] (0.75,0) rectangle (1.125,3);
			\filldraw[fill=red, draw=red] (1.125,0) rectangle (1.875,3);
			\shade[left color=red,right color=yellow] (1.875,0) rectangle (2.25,3);
			\filldraw[fill=yellow, draw=yellow] (2.25,0) rectangle (3,3);
			
			\node at (0.375,1.5) { $\approx \alpha_{ 1 } $};
			\node at (1.5,1.5) {\large $\approx \alpha_{ 2 } $};
			\node at (2.625,1.5) { $\approx \alpha_{ 1 } $};
			
			\end{tikzpicture}
			
			\caption{$ 1 > \varepsilon > 0 $}
			\label{subfigure_epsilon}
		\end{subfigure}
		\hfill
		\begin{subfigure}[b]{0.3\linewidth}
			\begin{tikzpicture}
			\filldraw[fill=yellow, draw=yellow] (0,0) rectangle (1.125,3);
			\shade[left color=yellow,right color=red] (1.125,0) rectangle (1.3125,3);
			\filldraw[fill=red, draw=red] (1.3125,0) rectangle (1.6875,3);
			\shade[left color=red,right color=yellow] (1.6875,0) rectangle (1.875,3);
			\filldraw[fill=yellow, draw=yellow] (1.875,0) rectangle (3,3);
			
			\node at (0.5625,1.5) {\Large $\approx \alpha_{ 1 } $};
			\node at (1.5,1.5) {\small $\approx \alpha_{ 2 } $};
			\node at (2.4375,1.5) {\Large $\approx \alpha_{ 1 } $};
			
			\end{tikzpicture}
			
			\caption{$ 1 \gg  \varepsilon >0 $}
			\label{subfigure_varepsilon_prime}
		\end{subfigure}
		\hfill
		\begin{subfigure}[b]{0.3\linewidth}
			\begin{tikzpicture}
			\filldraw[fill=yellow, draw=yellow] (0,0) rectangle (3,3);
			\draw[red,thick](1.5,0)--(1.5,3);
			
			\node at (0.75,1.5) {\LARGE $= \alpha_{ 1 } $};
			\node at (2.25,1.5) {\LARGE $= \alpha_{ 1 } $};
			
			\end{tikzpicture}
			
			\caption{$ \varepsilon = 0 $}
			\label{subfigure_varepsilon_zero}
		\end{subfigure}
		
		\caption{Profile of the solution $u_{ \varepsilon } $ as $ \varepsilon $ 
			tends to zero.}
		\label{figure_interfaces_collapse}
	\end{figure}
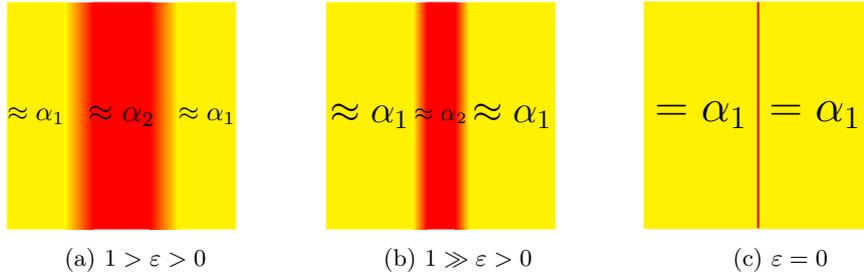
	
	We now introduce the solution concept by Hensel and Laux in 
	\cite{hensel_laux_varifold_solution_concept_for_mean_curvature_flow} which 
	tackles this issue. 
	For the two-phase case, the definition is as follows.
	
	\begin{definition}[De Giorgi type varifold solution for two-phase mean 
		curvature flow]
		\label{de_giorgi_varifold_solution_for_mcf}
		Let $ T < \infty $ be an arbitrary finite time horizon and let $ \mu = 
		\lm^{ 1 } \otimes ( \mu_{ t } )_{ t \in ( 0 , T ) } $ be a 
		family of oriented varifolds $ \mu_{ t } \in \mathcal{ M } \left( 
		\flattorus \times \mathbb{ S }^{ d - 1 } \right) $ for $ t \in ( 0 , T ) 
		$ such that $ t \mapsto \int_{ \flattorus \times \mathbb{ S }^{ d 
				-1 } } \eta (t, x , p ) \dd \mu_{ t } ( x , p ) $ is measurable 
		for all 
		$ \eta \in \lp^{ 1 } \left( ( 0 , T ) ; \cont \left( \flattorus, 
		\mathbb{ S 
		}^{ d - 1 } \right) \right) $. 
		Consider also a family $ A = ( A_{ t } )_{ t \in ( 0 , T ) } $ of 
		subsets of $ \flattorus $ with finite perimeter such that the associated 
		indicator function $ \chi ( x , t ) \coloneqq \chi_{ A _{ t } } ( x ) $ 
		is in the space
		$ \lp^{ \infty } \left( ( 0 , T ) ; \bv ( \flattorus ; \{ 0 , 
		1 \} ) \right) $.
		Let $ \sigma > 0 $ be a surface tension constant.
		
		Given an initial energy
		$ \omega^{ 0 } \in \mathcal{ M } \left( \flattorus \right) $ and initial 
		data $ \chi^{ 0 } \in \bv 
		\left( \flattorus ; \{ 0 , 1 \} \right)$, we call the pair $ ( \mu , \chi ) 
		$ a \emph{De Giorgi type varifold solution to two-phase mean curvature 
			flow 
			with initial data} $ ( \omega^{ 0 } , \chi^{ 0 } ) $ if the following 
		holds.
		\begin{enumerate}
			\item (Existence of a normal speed)
			Writing $ \mu_{ t } = \omega_{ t } \otimes ( \lambda_{ t, x } )_{ x 
				\in \flattorus} $ for the disintegration of $ \mu_{ t } $, we 
			require 
			the existence of some 
			$ V \in \lp^{ 2 } \left( ( 0 , T ) \times \flattorus ,
			\omega_{ t } \right) $ encoding a normal velocity in the sense of
			\begin{align}
			\notag
			&\sigma
			\int
			\chi ( T' , x ) \varphi ( T' , x ) 
			-
			\chi^{ 0 } ( x ) \varphi (0,x)
			\dd{ x }
			\\
			\label{equation_varifold_velocity}
			={} &
			\sigma
			\int_{ 0 }^{ T' }
			\int
			\chi
			\partial_{ t } \varphi 
			\dd{ x }
			\dd{ t }
			+
			\int_{ 0 }^{ T' }
			\int
			V \varphi 
			\dd{ \omega_{ t } }
			\dd{ t }
			\end{align}
			for almost every $ T' \in ( 0 , T ) $ and all $ \varphi \in \cont_{ 
				\mathrm{c} }^{ \infty } \left( [ 0 , T ) \times \flattorus \right) 
			$.
			
			\item (Existence of a generalized mean curvature vector)
			We require the existence of some 
			$ H \in \lp^{ 2 } \left( 
			( 0 , T ) \times \flattorus , \omega_{ t } ; \mathbb{ R }^{ d } \right) 
			$
			encoding a generalized mean curvature vector by
			\begin{equation}
			\label{equation_varifold_mean_curvature}
			\int_{ 0 }^{ T }
			\int
			\inner*{ H }{ \xi }
			\dd{ \omega_{ t } }
			\dd{ t }
			=
			-
			\int_{ 0 }^{ T }
			\int_{ \flattorus \times \mathbb{ S }^{ d - 1 } }
			\inner*{ \xi }{ \mathrm{Id} - p \otimes p }
			\dd{ \mu_{ t } ( x, p ) }
			\dd{ t }
			\end{equation}
			for all $ \xi \in \cont_{ \mathrm{c} }^{ \infty } \left( [ 0, T ) 
			\times \flattorus ; \mathbb{ R }^{ d } \right) $.
			
			\item (De Giorgi type optimal energy dissipation inequality)
			A sharp energy dissipation inequality holds in form of
			\begin{equation}
			\label{equation_varifold_de_giorgi_inequality}
			\omega_{ T' } ( \flattorus )
			+
			\frac{ 1 }{ 2 }
			\int_{ 0 }^{ T' }
			\int
			V^{ 2 }
			+
			\abs{ H }^{ 2 }
			\dd{ \omega_{ t } }
			\dd{ t }
			\leq
			\omega^{ 0 } ( \flattorus )
			\end{equation}
			for almost every $ T' \in ( 0 , T ) $.
			
			\item (Compatibility)
			For almost every $ t \in ( 0 , T ) $ and all $ \xi \in \cont^{ 
				\infty } \left( \flattorus ; \mathbb{ R }^{ d } \right) $, it holds 
			that
			\begin{equation}
			\label{equation_varifold_compatibility}
			\sigma
			\int
			\inner*{ \xi }
			{ \nabla \chi ( t , \cdot ) }
			=
			\int_{ \flattorus \times \mathbb{ S }^{ d - 1 } }
			\inner*{ \xi }{ p }
			\dd{ \mu_{ t } ( x, p ) }.
			\end{equation}
		\end{enumerate}
	\end{definition}
	
	We firstly want to discuss this definition. If we have a De Giorgi type $ \bv 
	$-solution $ \chi $ to two-phase mean curvature flow in the sense of 
	\Cref{de_giorgi_solution_to_mmcf}, we can think of 
	the 
	oriented varifold $ \mu $ as the measure 
	\begin{equation} 
	\label{equation_simple_varifold}
	\mu =
	\sigma \lm^{ 1 } |_{ 
		( 0 , T ) } \otimes ( \abs{ \nabla \chi  ( t ) } )_{ t \in ( 0 , T ) }
	\otimes  ( \delta_{ \nu ( t , x ) } )_{ t \in ( 0 , T ), x \in \flattorus 
	}
	\end{equation}
	and therefore the measure $ \omega_{ t } $ 
	becomes the energy measure $ \energy ( \chi ( t ) , \cdot ) $.
	The advantage of the varifold formulation is that our new energy measure $ 
	\omega_{ t } 
	$ is not restricted to only seeing the measure theoretic boundary of $ \chi $, 
	but can actually capture phenomena as described in 
	\Cref{figure_interfaces_collapse}.
	For example in such a scenario we would expect the measure $ \mu_{ t } $ to be 
	defined by
	\begin{equation}
	\label{equation_mu_in_figure}
	\mu_{ t } \coloneqq
	2 \sigma \hm^{ 1 } |_{ l } \otimes \left(  \frac{ 1 }{ 2 } \delta_{ e_{ 1 } 
	} + 
	\frac{ 1 }{ 2 }\delta_{ -e_{ 1 } }\right)_{ x \in \flattorus } ,
	\end{equation}
	where $ l $ is the red line to which the phase 
	of $ \alpha_{ 2 } $ shrank down as $ \varepsilon $ approached zero and $ e_{ 1 
	} = ( 1 , 0 )^{ \top } $. The factor 
	2 comes from the fact that we obtain energy from both of the collapsing 
	interfaces.
	
	The equation (\ref{equation_varifold_velocity}) for the  normal speed is simply 
	motivated through 
	the fundamental theorem of calculus. Assuming that everything is nice and 
	smooth, we can compute that 
	\begin{align*}
	&\sigma \int
	\chi ( T , x ) \varphi ( T , x ) - \chi^{ 0 } ( x ) \varphi ( 0 , x )
	\dd{ x }
	\\
	={} &
	\sigma \int_{ 0 }^{ T }
	\int
	\partial_{ t } \left(
	\chi ( t , x ) \varphi ( t , x )
	\right)
	\dd{ x }
	\dd{ t }
	\\
	={} &
	\sigma \int_{ 0 }^{ T }
	\int
	\partial_{ t } \chi ( t , x ) \varphi ( t , x )
	+
	\chi ( t , x ) \partial_{ t } \varphi ( t , x )
	\dd{ x }
	\dd{ t }
	\\
	={} &
	\int_{ 0 }^{ T }
	\int
	V \varphi 
	\dd{ \omega_{ t} }
	\dd{ t }
	+
	\sigma \int_{ 0 }^{ T }
	\int
	\chi ( t , x )
	\partial_{ t } \varphi ( t , x )
	\dd{ x }
	\dd{ t },
	\end{align*}
	where for the last equality, we used $ \partial_{ t } \chi = V \abs{ \nabla 
		\chi } \dd{ t } $ and $ \omega_{ t } = \sigma \abs{ \nabla \chi ( t ) } $.
	
	The equation (\ref{equation_varifold_mean_curvature}) for the generalized mean 
	curvature is straightforward. In fact if we
	assume that the varifold is given through equation 
	(\ref{equation_simple_varifold}), then we have that the right hand side of 
	equation 
	(\ref{equation_varifold_mean_curvature}) reads for a fixed time $ t $
	\begin{equation*}
	\int_{ \flattorus \times \mathbb{ S }^{ d - 1 } }
	\inner*{ \xi }{ \mathrm{Id} - p \otimes p }
	\dd{ \omega_{ t } }
	=
	\sigma
	\int
	\inner*{ \xi }{\mathrm{Id} - \nu \otimes \nu }
	\abs{ \nabla \chi }.
	\end{equation*}
	This is exactly the distributional formulation for the mean curvature vector.
	
	De Giorgis inequality (\ref{equation_varifold_de_giorgi_inequality}) is 
	self-explanatory since $ \omega_{ t } $ is the energy measure. The 
	compatibility condition (\ref{equation_varifold_compatibility}) is necessary to 
	couple the evolving set $ A_{ t } $ to 
	the varifold. Notice that in our above example (\ref{equation_mu_in_figure}), 
	this condition is still satisfied.
	Even though the energy measure $ 
	\omega_{ t } $ sees the red strip 
	and the measure theoretic boundary of the corresponding indicator function does 
	not, the term on the right hand side of 
	(\ref{equation_varifold_compatibility}) 
	is zero since
	\begin{equation*}
	\int_{ \flattorus \times \mathbb{ S }^{ d - 1 } }
	\inner*{ \xi }{ p }
	\dd{ \mu_{ t } }
	=
	\sigma
	\int_{ l }
	\inner*{ \xi }{ e_{ 1 } - e_{ 1 } }
	\dd{ \hm^{ 1 } }
	= 
	0.
	\end{equation*}
	
	For the multiphase case, we propose the following solution concept, which 
	generalizes
	\cite[Def.~2]{hensel_laux_varifold_solution_concept_for_mean_curvature_flow} to 
	the case of arbitrary surface tensions.
	
	\begin{definition}[De Giorgi type varifold solution to multiphase mean 
		curvature flow]
		\label{de_giorgi_varifold_solutions_for_mmcf}
		Let $T < \infty $ be an arbitrary finite time horizon and let $ P \in 
		\mathbb{ N }_{ \geq 2 } $ be the number of phases. For each 
		pair of phases $ ( i , j ) \in \{ 1 , \dotsc, P \}^{ 2 } $, let 
		$ \mu_{ i j } = \lm^{ 1 }|_{ ( 0 , T ) } \otimes ( \mu_{ t , i j } )_{ t 
			\in ( 0 , T ) } $ be a family of oriented 
		varifolds 
		$ \mu_{ t , ij } \in \mathcal{ M } \left(
		\flattorus \times \mathbb{ S }^{ d - 1 }
		\right) $ for
		$ t \in ( 0 , T ) $ such that the map 
		$ t \mapsto \int_{ \flattorus \times \mathbb{ S }^{ d - 1 } }
		\eta ( t , x , p )
		\dd{ \mu_{ t ,i j } ( x, p ) }$
		is measurable for all
		$ \eta \in \lp^{ 1 } \left(	
		( 0 , T ) ; \cont \left( \flattorus \times \mathbb{ S }^{ d- 1 } 
		\right) 
		\right) $.
		Define the evolving oriented varifolds $ \mu_{ i } = \lm^{ 1 }|_{ ( 0 , T 
			) } \otimes ( \mu_{ t , i } )_{ t \in ( 0 , T ) } $ for $ i \in \{ 1, 
		\dotsc, P \} $ and $ \mu = \lm^{ 1 }|_{ ( 0 , T ) } \otimes ( \mu_{ t 
		} )_{ t \in ( 0 , T ) } $ by 
		\begin{equation}
		\label{moon_equation}
		\mu_{ t , i }
		\coloneqq
		2 \mu_{ t , i i }
		+
		\sum_{ j = 1 , j \neq i }^{ P }
		\mu_{ t , i j }
		\quad \text{and} \quad
		\mu_{ t }
		\coloneqq
		\frac{ 1 }{ 2 }
		\sum_{ i = 1 }^{ P }
		\mu_{ t, i }.
		\end{equation}
		The disintegration of $ \mu_{ t , i j } $ is expressed in form of 
		$ \mu_{ t , i j } = \omega_{ t ,i j } \otimes \left( \lambda _{ t , x , i j 
		} \right)_{ x \in \flattorus } $ with expected value
		$ \langle \lambda_{ t , x , i j } \rangle 
		\coloneqq
		\int_{ \mathbb{ S }^{ d - 1 } }
		p 
		\dd{ \lambda_{ t , x , i j } ( p ) } $.
		Analogous expressions are introduced for the disintegrations of $ \mu_{ t, 
			i } $ and $ \mu_{ t } $.
		
		Furthermore consider a tuple 
		$ A = \left( A_{ 1 } , \dotsc , A_{ P } \right) $ 
		such that for each phase $ 1 \leq i \leq P $, we have a family
		$ A_{ i } = ( A_{ i } ( t ) )_{ t \in ( 0 , T ) } $ of subsets of $ 
		\flattorus $ with finite perimeter. We also require 
		$ \left( A_{ 1 } ( t ) , \dotsc, A_{ P } ( t ) \right) $
		to be a partition of $ \flattorus $ for all $ t \in ( 0 , T ) $ and 
		that for each $ 1 \leq i \leq P $, the associated indicator function 
		satisfies 
		$ \chi_{ i } \in \lp^{ \infty } \left(
		( 0 , T ) ;
		\bv \left( \flattorus ; \{ 0 , 1 \} \right)
		\right) $.
		We shortly write $ \chi = \left( \chi_{ 1 } , \dotsc, \chi_{ P } \right) $.
		
		Given initial data 
		$ (
		\omega^{ 0 },
		( \chi_{ i}^{ 0 } )_{ 1 \leq i \leq P}
		) $
		of the above form and a $ (P \times P) $-matrix of surface tensions $ 
		\sigma 
		$ such that $ \sigma_{ i j } > 0 $ for $ i \neq j 
		$, we call the pair $ \left( \mu , \chi \right) $ a
		\emph{De Giorgi type varifold solution to multiphase mean curvature flow 
			with initial data} $ ( \omega^{ 0 } , \chi^{ 0 } ) $ \emph{and surface 
			tensions} $ \sigma $ if the following requirements hold true.
		\begin{enumerate}
			\item (Existence of normal speeds)
			For each phase $ 1 \leq i \leq P $, there exists a normal speed
			$ V_{ i } \in \lp^{ 2 } \left(
			( 0 , T ) \times \flattorus, \omega_{ i } \right) $ in the sense 
			that
			\begin{align}
			\notag
			&
			\int
			\chi_{ i } ( T', x ) \varphi ( T', x ) 
			-
			\chi_{ i }^{ 0 } ( x ) \varphi ( 0 , x )
			\dd{ x }
			\\
			\label{equation_varifold_velocity_multiphase}
			={} &
			\int_{ 0 }^{ T' }
			\int
			\chi_{ i }
			\partial_{ t } \varphi
			\dd{ x }
			\dd{ t }
			+
			\sum_{ j =1, j \neq i  }^{ P }
			\frac{ 1 }{ \sigma_{ i j } }
			\int_{ 0 }^{ T' }
			\int
			V_{ i }
			\varphi
			\dd{ \omega_{ t , i j } }
			\dd{ t }
			\end{align}
			for almost every $ T' \in ( 0 , T ) $ and all $ \varphi \in \cont_{ 
				\mathrm{c} }^{ \infty } \left( [ 0 , T ) \times \flattorus\right) 
			$.
			
			\item (Existence of a generalized mean curvature vector)
			There exists a generalized mean curvature vector 
			$ H \in \lp^{ 2 } \left(
			( 0 , T )\times \flattorus , \omega; \mathbb{ R }^{ d } \right)
			$
			in the sense that
			\begin{equation}
			\label{equation_varifold_mean_curvature_multiphase}
			\int_{ 0 }^{ T }
			\int
			\inner*{ H }{ \xi }
			\dd{ \omega_{ t } }
			\dd{ t }
			=
			-
			\int_{ 0 }^{ T }
			\int_{ \flattorus \times \mathbb{ S }^{ d - 1 } }
			\inner*{ \diff \xi }
			{ \mathrm{Id} - p \otimes p }
			\dd{ \mu_{ t } ( x , p ) }
			\dd{ t }
			\end{equation}
			holds for all $ \xi \in \cont_{ 
				\mathrm{c} }^{ \infty } \left( [ 0 , T ) \times \flattorus ; 
			\mathbb{ 
				R }^{ d } \right) $.
			
			\item (De Giorgi type optimal energy dissipation inequality)
			A sharp energy dissipation inequality holds in form of
			\begin{equation}
			\label{equation_varifold_energy_dissipation_inequality}
			\omega_{ T' } ( \flattorus )
			+
			\frac{ 1 }{ 2 }
			\sum_{ i = 1 }^{ P }
			\int_{ 0 }^{ T' }
			\int
			V_{ i }^{ 2 }
			\frac{ 1 }{ 2 }
			\dd{ \omega_{ t , i } }
			\dd{ t }
			+
			\frac{ 1 }{ 2 }
			\int_{ 0 }^{ T' }
			\int
			\abs{ H }^{ 2 }
			\dd{ \omega_{ t } }
			\dd{ t }
			\leq
			\omega^{ 0 } ( \flattorus )
			\end{equation}
			for almost every $ T' \in ( 0 , T ) $.
			
			\item (Compatibility conditions)
			For all $ 1 \leq i , j \leq P $, we require 
			\begin{equation}
			\label{varifold_symmetry_of_energy_measures}
			\omega_{ t , i j }
			=
			\omega_{ t , j i } 
			\end{equation}
			for almost every $ t \in ( 0 , T ) $ and
			\begin{align}
			\label{varifold_symmetry_expectations}
			\langle \lambda_{ t, x , i j } \rangle
			&= 
			- \langle \lambda_{ t, x  ,j i } \rangle,
			\\
			\label{varifold_symmetry_velocities}
			V_{ i } ( t, x ) 
			&= 
			- V_{ j } ( t, x ),
			\\
			\label{varifold_mean_curvature_points_in_normal_direction}
			\abs{ \langle \lambda_{ t , x , i j } \rangle }^{ 2 }
			H ( t, x )
			& =
			\inner*{ H( t , x ) }{ \langle \lambda_{ t , x , i j } \rangle }
			\langle \lambda_{ t , x }^{ i j } \rangle
			\end{align}
			for almost every $ t \in ( 0 , T ) $ and $ \omega_{ t , i j } $ 
			almost every $ x \in \flattorus $. Finally for all $ 1 \leq i \leq P $, 
			we 
			have
			\begin{equation}
			\label{varifold_compatibility_condition_multiphase}
			\int
			\inner*{ \xi }{ \nabla \chi_{ i } ( t , \cdot ) }
			=
			\sum_{ j = 1 , j \neq i }^{ P }
			\frac{ 1 }{ \sigma_{ i j } }
			\int_{ \flattorus \times \mathbb{ S }^{ d - 1 } }
			\inner*{ \xi }{ p }
			\dd{ \mu_{ t , i j } }
			\end{equation}
			for almost every $ t \in ( 0 , T ) $ and every $ \xi \in \cont^{ 
				\infty } \left( \flattorus ; \mathbb{ R }^{ d } \right) $.
		\end{enumerate}
	\end{definition}
	
	As before, we first want to motivate this definition. If we have a De Giorgi 
	type $ \bv $-solution to multiphase mean curvature flow in the sense of 
	\Cref{de_giorgi_solution_to_mmcf}, we 
	can think of the varifold $ \mu_{ t , i j } $ for $ i \neq j $ as
	\begin{equation}
	\label{varifold_simplest_case_mutliphase}
	\mu_{ t , i j }
	=
	\sigma_{ i j }
	\hm^{ d - 1 }\llcorner_{ \Sigma_{ i j } } 
	\otimes
	( \delta_{ \nu_{ i } } )_{ x \in \flattorus }
	\end{equation}
	and $ \mu_{ t , i i } = 0 $.
	But if the approximate interfaces collapse as in 
	\Cref{figure_interfaces_collapse}, then this will be captured by the 
	varifolds $ \mu_{ t, i i }$, which will be 
	described by
	\begin{equation}
	\label{varifold_when_interfaces_collapse}
	\mu_{ t, 1 1 }
	=
	2
	\hm^{ 1 } \llcorner_{ l }
	\otimes
	\left( 
	\frac{ 1 }{ 2 } \delta_{ e_{ 1 } } 
	+ 
	\frac{ 1 }{ 2 } \delta_{ - e_{ 1 } } 
	\right)_{ x \in \flattorus }
	\end{equation}
	as in the two-phase case.
	
	Considering the definition of $ \mu_{ t } $ by equality (\ref{moon_equation}), 
	the factor $1/2$ is present since every 
	interface $ \mu_{ t , i j } $, $ i \neq j $, is counted twice. Thus we need the 
	factor 2 in front of $ \mu_{ t , i i } $ in the definition of $ \mu_{ t , i } $
	since this interface is not counted twice.
	
	The equation (\ref{equation_varifold_velocity_multiphase}) is similar to the 
	two-phase case. Note however that we only consider the energy measures $ 
	\omega_{ t , i j } $ for $ i \neq j $ since we do not expect $ \omega_{ t , i i 
	} $ to be relevant for the motion of the $ i $-th phase. Equation
	(\ref{equation_varifold_mean_curvature_multiphase}) is as in the two-phase 
	case. For the energy dissipation inequality, we note again that we need the factor $ 
	1/2 $ in front of $ \omega_{ t , i } $ since each interface will be counted 
	twice. 
	
	The first compatibility condition (\ref{varifold_symmetry_of_energy_measures}) 
	follows simply from $ \sigma_{ i j } = \sigma_{ j i }$ and $ \Sigma_{ i j } = 
	\Sigma_{ j i } $ if we have a De Giorgi type $ \bv $-solution. But it should 
	even 
	hold true in situations like 
	\Cref{figure_interfaces_collapse}, since the equation 
	(\ref{varifold_symmetry_of_energy_measures}) becomes trivial for $ i = 
	j $. 
	In the same situation, the second compatibility condition 
	(\ref{varifold_symmetry_expectations}) states that even if approximate 
	interfaces collapse, we have $ \nu_{ i } = - \nu_{ j } $ on $ \Sigma_{ i j } $ 
	$ \hm^{ 
		d -1 } $-almost everywhere. 
	Indeed, for $ i = j $, the equation states that 
	$ \langle \lambda_{ t , x , i i } \rangle = 0 $, which is satisfied by equation
	(\ref{varifold_when_interfaces_collapse}). 
	Similarly we explain the anti-symmetry of the velocities 
	(\ref{varifold_symmetry_velocities}). The compatibility condition 
	(\ref{varifold_mean_curvature_points_in_normal_direction}) encodes that the 
	mean curvature vector should always point in direction of the inner unit 
	normal, which is not necessarily the case for varifolds, as demonstrated in 
	\Cref{mean_curvature_vector_does_not_have_to_point_in_normal_direction}.
	Nevertheless this can always be assured if we have a De Giorgi type $ \bv 
	$-solution, see the proof of
	\Cref{bv_solutions_are_varifold_solutions}. 
	For the last compatibility condition 
	(\ref{varifold_compatibility_condition_multiphase}), we again notice that it 
	behaves similarly to its two-phase equivalent.
	
	\begin{theorem}
		\label{bv_solutions_are_varifold_solutions}
		Every  De Giorgi type $ \bv $-solution to multiphase mean curvature 
		flow in the sense of \Cref{de_giorgi_solution_to_mmcf} is also a De Giorgi 
		type varifold solutions for multiphase mean curvature flow in the sense of
		\Cref{de_giorgi_varifold_solutions_for_mmcf}.
		The varifold $ \mu $ is given by
		\begin{align*}
		\mu_{ t, ij  }
		& \coloneqq
		\sigma_{ i j }
		\hm^{ d- 1 }\llcorner_{ \Sigma_{ i j } ( t ) }
		\otimes
		( \delta_{ \nu_{ i } ( t, x ) } )_{ x \in \flattorus }
		\end{align*} 
		for $ i \neq j $, $
		\mu_{ t , i i } = 0 $ and the initial energy is $ \omega^{ 0 } 
		= \energy ( \chi^{ 0 } , \cdot ) $.
	\end{theorem}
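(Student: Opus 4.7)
The plan is to take the prescribed varifold $\mu$ and verify the four requirements of \Cref{de_giorgi_varifold_solutions_for_mmcf} one by one, essentially unpacking definitions. First I would compute the auxiliary measures: since $\mu_{t,ii} = 0$ and $\mu_{t,ij} = \sigma_{ij} \hm^{d-1}\llcorner \Sigma_{ij}(t) \otimes \delta_{\nu_i}$ for $i\neq j$, the formula (\ref{moon_equation}) gives $\omega_{t,i} = \sum_{j \neq i} \sigma_{ij} \hm^{d-1}\llcorner \Sigma_{ij}$ and $\omega_t = \sum_{i<j} \sigma_{ij} \hm^{d-1}\llcorner \Sigma_{ij} = \energy(\chi(t);\cdot)$, where I use that $\Sigma_{ij} = \Sigma_{ji}$ and $\nu_j = -\nu_i$ on $\Sigma_{ij}$ so the pair $(i,j)$ contributes twice to $\sum_i \mu_{t,i}$ and the factor $1/2$ in $\mu_t$ cancels.

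For the existence of normal speeds, I would use $\partial_t \chi_i = V_i \abs{\nabla \chi_i} \dd{t}$ from \Cref{de_giorgi_solution_to_mmcf} together with integration by parts in time. Since $\abs{\nabla \chi_i} = \sum_{j \neq i} \hm^{d-1}\llcorner \Sigma_{ij}$, the right-hand side of (\ref{equation_varifold_velocity_multiphase}) matches exactly. For the existence of the generalized mean curvature vector, the computation of $\omega_t$ above and the identity $(\mathrm{Id} - \nu_i\otimes\nu_i) = (\mathrm{Id} - (-\nu_i)\otimes(-\nu_i))$ show that (\ref{equation_varifold_mean_curvature_multiphase}) is just a rewriting of (\ref{mean_curvature_vector_bv_de_giorgi}). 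For the energy dissipation inequality, I would use $V_i = -V_j$ (see below) so that $V_i^2 = V_j^2$ on $\Sigma_{ij}$, and rewrite
\begin{equation*}
\frac{1}{2} \sum_{i=1}^P \int_0^{T'} \int V_i^2 \frac{1}{2}\dd{\omega_{t,i}}\dd{t}
= \frac{1}{2} \sum_{i<j} \sigma_{ij} \int_0^{T'}\int_{\Sigma_{ij}} V_i^2 \dd{\hm^{d-1}}\dd{t},
\end{equation*}
which together with $\omega_{T'}(\flattorus) = \energy(\chi(T'))$ and the corresponding $\abs{H}^2$ term reduces (\ref{equation_varifold_energy_dissipation_inequality}) to (\ref{optimal_energy_dissipation_solution}).

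The compatibility conditions (\ref{varifold_symmetry_of_energy_measures})--(\ref{varifold_symmetry_expectations}) and (\ref{varifold_compatibility_condition_multiphase}) are immediate from $\sigma_{ij} = \sigma_{ji}$, $\Sigma_{ij} = \Sigma_{ji}$, $\nu_j = -\nu_i$ on $\Sigma_{ij}$, and the definition of the distributional derivative $\nabla \chi_i = \nu_i \abs{\nabla \chi_i}$. The two genuine obstacles will be the anti-symmetry of velocities (\ref{varifold_symmetry_velocities}) and the normal direction of the curvature (\ref{varifold_mean_curvature_points_in_normal_direction}). For the former, I would differentiate $\sum_i \chi_i = 1$ in time to get $\sum_i V_i \abs{\nabla \chi_i} = 0$, and localize to $\Sigma_{ij}$ using that $\hm^{d-1}$-almost every point of $\Sigma_{ij}$ is a point of density $1/2$ of exactly $\Omega_i$ and $\Omega_j$; thus locally $\abs{\nabla \chi_k} = 0$ for $k \notin \{i,j\}$ and $\abs{\nabla \chi_i} = \abs{\nabla \chi_j} = \hm^{d-1}\llcorner \Sigma_{ij}$, forcing $V_i + V_j = 0$ there.

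For the mean curvature to be normal, I would argue that the $H$ produced by (\ref{mean_curvature_vector_bv_de_giorgi}) may be replaced without loss of generality by its normal projection. Indeed, the right-hand side of (\ref{mean_curvature_vector_bv_de_giorgi}) depends on $\xi$ only through the tangential derivative $\inner*{\diff \xi}{\mathrm{Id}-\nu_i\otimes\nu_i}$, so if I set $\tilde H := \inner*{H}{\nu_i}\nu_i$ on each $\Sigma_{ij}$ then $\tilde H$ satisfies the same identity, has $\lp^2$-norm bounded by that of $H$ (preserving the dissipation inequality), and by uniqueness of the $L^2$-representative in a one-dimensional subspace coincides with the original $H$ up to tangential components which must vanish. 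This gives (\ref{varifold_mean_curvature_points_in_normal_direction}), where $\langle \lambda_{t,x,ij}\rangle = \nu_i$ is a unit vector so $\abs{\langle \lambda_{t,x,ij}\rangle}^2 = 1$ and the identity reduces to $H = \inner*{H}{\nu_i}\nu_i$.
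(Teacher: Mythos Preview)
Your overall structure mirrors the paper's proof: compute the auxiliary measures, then verify the four items of \Cref{de_giorgi_varifold_solutions_for_mmcf} one by one. The computations of $\omega_{t,i}$, $\omega_t$, the curvature equation, the energy dissipation inequality, and the first three compatibility conditions are essentially the same as in the paper.

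There is a genuine gap in your argument for (\ref{varifold_mean_curvature_points_in_normal_direction}). You claim that because the right-hand side of (\ref{mean_curvature_vector_bv_de_giorgi}) depends on $\xi$ only through $\inner*{\diff \xi}{\mathrm{Id}-\nu_i\otimes\nu_i}$, the normal projection $\tilde H \coloneqq \inner*{H}{\nu_i}\nu_i$ satisfies the same identity. This implication does not hold: saying that $\tilde H$ satisfies the identity is equivalent to $\int \inner*{H-\tilde H}{\xi}\dd{\omega_t}=0$ for all $\xi$, which is precisely the statement $H=\tilde H$ you are trying to prove. Since $H$ is the \emph{unique} $\lp^2$-representative of the first-variation functional (by Riesz), you cannot ``replace'' it; you must show that the given $H$ is already normal. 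That this requires genuine structural input is witnessed by \Cref{mean_curvature_vector_does_not_have_to_point_in_normal_direction} in the paper, which exhibits a (non-rectifiable) varifold whose mean curvature vector is purely tangential; your argument, if valid, would apply to that example as well. The paper instead invokes Brakke's theorem \cite[Thm.~5.8]{brakke_kenneth_motion_of_surface_by_mean_curvature}, which uses rectifiability (via blow-up to tangent planes with constant density) to force the tangential component of $H$ to vanish, and notes that the result extends from integer varifolds to the finite linear combination $\sum_{i<j}\sigma_{ij}\,v(\Sigma_{ij})$.

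A smaller point: for (\ref{equation_varifold_velocity_multiphase}) the distributional identity $\partial_t\chi_i = V_i\abs{\nabla\chi_i}\dd{t}$ is only given on $(0,T)\times\flattorus$, so ``integration by parts in time'' does not directly produce the boundary term $\int\chi_i^0\varphi(0,\cdot)\dd{x}$. The paper handles this by multiplying with a cutoff $\eta_n$ supported in $(0,T]$ and using the assumed continuity $\chi\in\cont([0,T];\lp^2)$ to pass to the limit; you should make this step explicit.
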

	
	\begin{proof}
		We first note that
		\begin{align*}
		\mu_{ t , i }
		& =
		\sum_{ j \neq i }
		\sigma_{ i j }
		\hm^{ d- 1 }\llcorner_{ \Sigma_{ i j } }
		\otimes
		( \delta_{ \nu_{ i } } )_{ x \in \flattorus }
		\shortintertext{and}
		\mu_{ t }
		& =
		\sum_{ 1 \leq i < j \leq P }
		\sigma_{ i j }
		\hm^{ d - 1 } \llcorner_{ \Sigma_{i j } }
		\otimes
		\left( \frac{ 1 }{ 2 } \delta_{ \nu_{ i } } + \frac{ 1 }{ 2 } \delta_{ 
			\nu_{ j } } \right)_{ x \in \flattorus }.
		\end{align*}
		Thus the energy measures are given by
		\begin{align*}
		\omega_{ t , i } 
		&= 
		\sum_{ j \neq i }
		\sigma_{ i j }
		\hm^{ d - 1 } \llcorner_{ \Sigma_{ i j } ( t ) }
		\shortintertext{and}
		\omega_{ t }
		& =
		\sum_{ 1 \leq i < j \leq P }
		\sigma_{ i j }
		\hm^{ d - 1 } \llcorner_{ \Sigma_{ i j } ( t ) }
		=
		\energy ( \chi ( t ) ; \cdot ).
		\end{align*}
		The requirement 
		$ \chi_{ i } \in \lp^{ \infty } \left(
		( 0 , T ) ; \bv ( \flattorus ; \{ 0 , 1 \} ) \right) $
		is an immediate consequence of the energy dissipation inequality 
		(\ref{optimal_energy_dissipation_solution}).
		The existence of normal velocities is given, but we need to check that
		equation (\ref{equation_varifold_velocity_multiphase}) holds.
		If we assume that $ \varphi $ is compactly supported in $ ( 0 , T ) \times 
		\flattorus $, then the equation 
		follows by approximating $ \chi_{ i } $ with $ \rho_{ n } \ast \chi_{ i } 
		$, where $ \rho_{ n } $ is a sequence of radial symmetric standard 
		mollifiers. 
		In general, we have the problem that $ \partial_{ t } \chi_{ i } = V_{ i } 
		\abs{ \nabla \chi_{ i } } \dd{ t } $ holds only tested against functions 
		supported in $ ( 0 , T ) \times \flattorus $.
		Therefore we take a sequence $ \eta_{ n } $ of non-decreasing smooth 
		functions with compact 
		support in $ ( 0 , T ] $ which are equal to $ 1 $ on $ ( 1/ n , T ] $. Then 
		we have
		\begin{align*}
		&\int
		\chi_{ i } ( T' , x ) \varphi ( T', x )
		\dd{ x }\\
		={} &
		\lim_{ n \to \infty }
		\int
		\chi_{ i } ( T', x ) \eta_{ n } ( T' ) \varphi ( T' , x )
		\dd{ x }
		\\
		={} &
		\lim_{ n \to \infty }
		\int_{ 0 }^{ T' }
		\int
		\eta_{ n } \varphi V_{ i }
		\abs{ \nabla \chi^{ i } }
		\dd{ t }
		+
		\int_{ 0 }^{ T' }
		\int
		\chi_{ i }
		\eta_{ n }
		\partial_{ t } \varphi 
		\dd{ x }
		\dd{ t }
		+
		\int_{ 0 }^{ T' }
		\int
		\chi_{ i }
		\partial_{ t } \eta_{ n }
		\varphi
		\dd{ x }
		\dd{ t }
		\end{align*}
		for almost every $ 0 < T' < T $.
		Using the dominated convergence theorem, we see that the first two summands 
		converge to the right hand side of the velocity equation 
		(\ref{equation_varifold_velocity_multiphase}). For the third summand, we 
		compute that since $ \int_{ 0 }^{ T } \eta' \dd{ t } = 1 $, we have
		\begin{align*}
		& \abs{
			\int_{ 0 }^{ T' }
			\partial_{ t } \eta_{ n }
			\int
			\chi_{ i }
			\varphi
			\dd{ x }
			\dd{t}
			-
			\int
			\chi^{ 0 }_{ i } ( x )
			\varphi ( 0 , x )
			\dd{ x }
		}
		\\
		={} &
		\abs{ 
			\int_{ 0 }^{ T' }
			\partial_{ t } \eta_{ n }
			\int
			\chi_{ i }
			\varphi
			-
			\chi^{ 0 }_{ i }
			\varphi ( 0 , x )
			\dd{ x }
			\dd{ t }
		}
		\\
		\lesssim{} &
		\int_{ 0 }^{ T' } 
		\partial_{ t } \eta_{ n }
		\left(
		\int
		\abs{ \chi^{ i } \varphi - \chi^{ 0 }_{ i } \varphi ( 0 , x 
			) 
		}^{ 2 }
		\dd{ x }
		\right)^{ 1/2 }
		\dd{ t }
		\\
		\leq {} &
		\sup_{ t \in ( 0, \frac{ 1 }{ n } ) }
		\left(
		\int
		\abs{ \chi_{ i } \varphi - \chi_{ i }^{ 0 } \varphi ( 0 , x 
			) }^{ 2 }
		\dd{ x }
		\right)^{ 1/2 }.
		\end{align*}
		This converges to zero since $ \chi_{ i } $ attains the initial data 
		continuously with respect to the $ \lp^{ 2 } $-norm. Therefore the velocity 
		equation 
		(\ref{equation_varifold_velocity_multiphase}) follows.
		
		The curvature equation (\ref{equation_varifold_mean_curvature_multiphase}) 
		follows 
		immediately from equation (\ref{mean_curvature_vector_bv_de_giorgi})
		since
		\begin{equation*}
		\int_{ \flattorus \times \mathbb{ S }^{ d - 1 } }
		\inner*{ \diff \xi }{ \mathrm{Id} - p \otimes p }
		\dd{ \mu_{ t } }
		=
		\sum_{ 1 \leq i < j \leq P }
		\sigma_{ i j }
		\int_{ \Sigma_{ i j } }
		\inner*{ \diff \xi }{\mathrm{Id} - \nu_{i } \otimes \nu_{ i } }
		\dd{ \hm^{ d - 1 } }.
		\end{equation*}
		We used here that on $ \Sigma_{ i j } $, 
		we have $ \nu_{i } \otimes \nu_{ i } = \nu_{ j } \otimes \nu_{ j } $
		since $ \nu_{ i } = - \nu_{j } $.
		De Giorgi's optimal energy dissipation inequality is also identical.
		
		The compatibility conditions 
		(\ref{varifold_symmetry_of_energy_measures})-(\ref{varifold_symmetry_velocities})
		all hold restricted to $ \Sigma_{ i j } $ $ \hm^{ d - 1 } $-almost 
		everywhere and are therefore satisfied.
		For the compatibility condition 
		(\ref{varifold_mean_curvature_points_in_normal_direction}), we have to 
		prove that the 
		mean curvature vector $ H $ points in the direction of $ \nu_{ i } $ $ 
		\hm^{ d - 1 } $-almost everywhere since $ \langle \lambda_{ t , x , i j } 
		\rangle = \nu_{ i } ( t , 
		x ) $ holds for $ \hm^{ d- 1 } $-almost every $ x $ on $ \Sigma_{ i j } $. 
		But this has been proven by Brakke in 
		\cite[Thm.~5.8]{brakke_kenneth_motion_of_surface_by_mean_curvature}. Note 
		that we apply it for a fixed time t to the varifold $ V = \mu_{ t } $, 
		which is strictly speaking no integer varifold. But all results still hold 
		since 
		\begin{equation*}
		V = 
		\sum_{ 1 \leq i < j \leq P }
		\sigma_{ i j }
		v ( \Sigma_{ i j } )
		\end{equation*}
		is a finite sum. Here $ v ( \Sigma_{ i j } ) $ is the naturally associated 
		varifold to $ \Sigma_{ i j } $ as defined by Brakke.
		The last compatibility condition 
		(\ref{varifold_compatibility_condition_multiphase}) is a consequence of 
		\begin{equation*}
			\abs{ \nabla \chi_{i } }
			=
			\sum_{ i \neq j }
				\hm^{ d - 1 } \llcorner_{ \Sigma_{ i j } },
		\end{equation*}
		which holds since the sets $ \Omega_{ j } $ are a partition of the flat torus.
		Thus we have collected all necessary claims and the proof is finished.
	\end{proof}
	
	\begin{example}
		\label{mean_curvature_vector_does_not_have_to_point_in_normal_direction}
		We want to present an example of a varifold where the mean curvature does 
		not point in normal direction.
		Let $ \flattorus = [ 0 , \Lambda )^{ 2 } $ be the flat torus in two 
		dimensions and let $ \rho \colon \mathbb{ R } \to (-\infty, \infty ) $ be 
		some positive smooth $\Lambda$-periodic function which is not constant. 
		Then we consider the varifold given by 
		\begin{equation*} \mu = \rho ( x_{ 1 } ) \hm^{ 1 } 
		|_{ [ 0 , \Lambda ) \times \{ 0 \} } \otimes \delta_{ e_{ 2 } },
		\end{equation*} 
		where $ 
		e_{ 2 } = ( 0 , 1 )^{ \top } $. We 
		compute that for a given test vector field $ \xi $, we have
		\begin{align*}
		\int
		\inner*{ \diff \xi }{ \mathrm{Id} - p \otimes p }
		\dd{ \mu }
		& =
		\int_{ 0 }^{ \Lambda }
		\inner*{ \diff \xi ( x_{ 1 } , 0 ) }{ e_{ 1 } \otimes e_{ 1 } }
		\rho ( x_{ 1 } )
		\dd{ x_{ 1 } }
		\\
		& = 
		\int_{ 0 }^{ \Lambda }
		\partial_{ x_{ 1 } } \xi^{ 1 } \rho ( x_{ 1 } )
		\dd{ x_{ 1 } }
		\\
		& =
		-
		\int_{ 0 }^{ \Lambda }
		\xi^{ 1 }
		\rho' ( x_{ 1 } )
		\dd{ x_{ 1 } }.
		\end{align*}
		If a mean curvature vector $ H $ would exist, then this integral would have 
		to be equal to
		\begin{equation*}
		-\int_{ 0 }^{ \Lambda }
		\inner*{ H ( x_{ 1 } , 0 ) }{ \xi ( x_{ 1 } , 0 )  }
		\rho ( x_{ 1 } )
		\dd{ x_{ 1 } }.
		\end{equation*}
		This implies that $ H $ would be given on $ [ 0 , \Lambda ) \times \{ 0 \} 
		$ $ 
		\hm^{ 1 } $-almost everywhere by 
		\begin{equation*}
		H ( x ) = \frac{ \rho' ( x_{ 1 } ) }{ \rho ( x_{ 1 } ) } e_{ 1 },
		\end{equation*}
		which does not lie in the normal space of the varifold at points where $ 
		\rho ' ( x_{ 1 } ) \neq 0 $.
	\end{example}
	
	% \appendix
	
	%\include{chapter_appendix}

%\backmatter
 	\emergencystretch=1em		%prevents overfull h boxes in bibliography
	\printbibliography			%ensures that bibliography 
	%appears in table of contents
	
\end{document}